\newtheorem{thm}{Theorem}[section] 
\newtheorem{prop}[thm]{Proposition}
\newtheorem{cor}[thm]{Corollary} 
\newtheorem{lem}[thm]{Lemma}
\theoremstyle{definition}
\numberwithin{equation}{section}
\begin{document}

\title[Stable foliations with respect to Fuglede modulus]{Stable foliations with respect to Fuglede modulus and level sets of $p$--harmonic functions}
\author{Ma\l gorzata Ciska--Niedzia\l omska \and Kamil Niedzia\l omski}
\date{}

\begin{abstract}
We continue the study of the variation of the $p$--modulus of a foliation initiated by the first author. We derive the formula for the second variation which allows to study $p$--stable foliations. We obtain some results concerning codimension one $p$--stable foliations. Moreover, we derive the equation for the critical point of the $p$--modulus functional of a foliation given by the level sets of smooth function. We show the correlation with the $q$--harmonicity. We give some examples. In particular, we show that foliations given by the distance function are critical points of $p$--modulus functional.
\end{abstract}

\subjclass[2000]{58E35; 58E20; 53C12}
\keywords{$p$--modulus, foliation, second variation, stability, $p$--harmonic map, distance function, Hardy type inequality}

\address{
Department of Mathematics and Computer Science \endgraf
University of \L\'{o}d\'{z} \endgraf
ul. Banacha 22, 90-238 \L\'{o}d\'{z} \endgraf
Poland
}
\email{mciska@math.uni.lodz.pl}
\email{kamiln@math.uni.lodz.pl}

\maketitle

\section{Introduction}

Extremal length \cite{ab}, the notion which was the starting point to the definition of modulus, is a conformal invariant widely used in the context of quasi--conformal mappings on the plane. The $p$--modulus of family of curves plays major role in many areas of mathematics, such as potential theory or geometric measure theory. In the latter, it allows to deal with Sobolev spaces on metric measure spaces.

A $p$--modulus was generalized to a family of measures and, in particular to a family of Lipschitz surfaces by Fuglede \cite{bf}. This leads in a natural way to a $p$--modulus of families of submanifolds on a Riemannian manifold and to the $p$--modulus of a foliation. Therefore it is natural to study the properties of such notion. 

The modulus of a family of hyper--surfaces separating two components of the complement of doubly connected set is the reciprocal of the capacity of this set \cite{wz,jh}. The capacity is an important tool in the potential theory, differential equations but has also implications to differential geometry \cite{jf0,jf,cm}. By the monotonicity of modulus, it is natural to find and describe these which modulus attains its maximum.  

In this paper, we continue the approach initiated in \cite{mc2} by the first author. We derive the second variation of $p$--modulus functional. This leads to the definition of $p$--stable foliations (for the $p$--modulus). We give more explicit condition for $p$--stable foliations in the codimension one case. This condition reminds the weighted Hardy type inequality, with the weight being the $p$--th power of the extremal function for the $p$--modulus.

We also focus on the foliations given by the level sets of any smooth functions. We derive the differential equation for critical points of $p$--modulus functional. We show the correlation of the critical points of $p$--modulus functional with the $q$--harmonicity of the function defining the foliation. We show that foliations given by the distance function are $p$--stable.

The main tools of all considerations are the following. 

In the context of the second variation of $p$--modulus we use the integral formula obtained by the first author \cite{mc1} and slightly generalized in this paper to the following
\begin{equation*}
\int_M f_0^{p-1}\varphi\hat{\psi}\,d\bar{\mu}=\int_M f_0^{p-1}\hat{\varphi}\psi\,d\bar{\mu},
\end{equation*} 
where $\bar{\mu}$ is the Lebesgue measure on $M$, $f_0$ the extremal function for the $p$--modulus of a given foliation $\mathcal{F}$ and $\hat{\varphi}$ is the integral over the leaves of $\mathcal{F}$ (for the definitions see the next section). This formula is of the independent interest and can be seen as a generalization of co--area formula.

In the context of $q$--harmonicity of a function $u:M\to\mathbb{R}$ on a Riemannian manifold and $p$--modulus of related foliation $\mathcal{F}_u$, we derive properties of the function
\begin{equation*}
\nu^q_u(t)=\int_{u^{-1}(t)}|\nabla u|^{q-1}\,d\mu_{u^{-1}(t)},\quad t\in u(M).
\end{equation*} 
We compute the differential of $\nu^q_u$ and use obtained formula in order to obtain the differential equation for the critical point of the $p$--modulus functional.

\section{Basic definitions and facts}

Let $(M,g)$ be a Riemannian manifold, $\mathcal{F}$ a $k$--dimensional foliation on $M$. Denote by $\bar{\mu}$ the Lebesgue measure on $M$ and let $\mu_L$ denotes the Lebesgue measure on the leaf $L\in\mathcal{F}$. 

Fix $p>1$. We say that a nonnegative Borel function $f:M\to\mathbb{R}$ is {\it admissible} if 
\begin{equation*}
\int_L f\,d\mu_L\geq 1
\end{equation*} 
for all $L\in\mathcal{F}$. The set of admissible functions is denoted by ${\rm adm}(\mathcal{F})$. The $p$--{\it modulus} of $\mathcal{F}$ is the number
\begin{equation*}
{\rm mod}_p(\mathcal{F})=\inf_{f\in{\rm adm}_{\mathcal{F}}}\int_M f^p\,d\bar{\mu}
\end{equation*}
in the case when ${\rm adm}(\mathcal{F})$ is nonempty, and we put ${\rm mod}_p(\mathcal{F})=\infty$ otherwise. Notice that $p$--modulus can be defined for any family $\mathcal{F}$ of $k$--dimensional submanifolds. The original definition of modulus was introduced for the family of measures by Fuglede \cite{bf}. The $p$--modulus has the following basic properties \cite{bf,bl1}
\begin{enumerate}
\item if $\mathcal{F}\subset\mathcal{G}$, then ${\rm mod}_p(\mathcal{F})\leq{\rm mod}_p(\mathcal{G})$,
\item if $\mathcal{F}\subset\bigcup_i\mathcal{F}_i$, then ${\rm mod}_p(\mathcal{F})\leq\sum_i{\rm mod}_p(\mathcal{F}_i)$.
\end{enumerate}
In other words, $p$--modulus is an outer measure (on the set of $k$--dimensional submanifolds).

If there is an admissible function $f_0$, which realizes the $p$--modulus, i.e.
\begin{equation*}
{\rm mod}_p(\mathcal{F})=\int_M f_0^p\,d\bar{\mu},
\end{equation*} 
then we call $f_0$ the {\it extremal function} for the $p$--modulus of $\mathcal{F}$. One can show the following characterization of existence of extremal function \cite{mc1}.

\begin{prop}
There is extremal function for the $p$--modulus of $\mathcal{F}$ if and only if for any subfamily $\mathcal{E}\subset\mathcal{F}$ such that ${\rm mod}_p(\mathcal{E})=0$ we have $\bar{\mu}(\bigcup\mathcal{E})=0$.
\end{prop}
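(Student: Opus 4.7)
The plan is to deduce the equivalence from two ingredients: the uniform convexity of $L^p$ for $p>1$, and Fuglede's lemma, which asserts that $L^p$-convergence of a sequence of nonnegative Borel functions forces convergence of the leaf integrals outside a subfamily of vanishing $p$-modulus.

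For the direction $(\Leftarrow)$ I would take a minimizing sequence $(f_n)\subset{\rm adm}(\mathcal{F})$, i.e.\ $\int_M f_n^p\,d\bar\mu\to{\rm mod}_p(\mathcal{F})$. Since ${\rm adm}(\mathcal{F})$ is convex, averages $(f_n+f_m)/2$ remain admissible, and Clarkson's inequality forces $(f_n)$ to be Cauchy in $L^p$; let $f_0$ denote the limit. Fuglede's lemma then produces a subfamily $\mathcal{E}_0\subset\mathcal{F}$ with ${\rm mod}_p(\mathcal{E}_0)=0$ such that $\int_L f_n\,d\mu_L\to\int_L f_0\,d\mu_L$ for every $L\notin\mathcal{E}_0$, whence $f_0$ is admissible on $\mathcal{F}\setminus\mathcal{E}_0$. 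By the hypothesis, $\bar\mu(\bigcup\mathcal{E}_0)=0$; redefining the Borel representative of $f_0$ to be $+\infty$ on $\bigcup\mathcal{E}_0$ preserves its $L^p$-norm while restoring admissibility on $\mathcal{E}_0$, yielding the extremal.

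For the direction $(\Rightarrow)$ I would assume $f_0$ is extremal and fix $\mathcal{E}\subset\mathcal{F}$ with ${\rm mod}_p(\mathcal{E})=0$; set $U=\bigcup\mathcal{E}$. Subadditivity of the $p$-modulus gives ${\rm mod}_p(\mathcal{F})={\rm mod}_p(\mathcal{F}\setminus\mathcal{E})$, so $f_0$ is simultaneously extremal for $\mathcal{F}\setminus\mathcal{E}$. Choosing $g_n\in{\rm adm}(\mathcal{E})$ with $\|g_n\|_p\to 0$ and $g_n$ supported in $U$, I would construct admissible competitors by modifying $f_0$ on well-chosen subsets $A_n\subset U$ and compensating with $g_n$; strict convexity of $L^p$ (uniqueness of the extremal) then constrains these modifications and, after a suitable choice of $A_n$, forces $\int_U f_0^p\,d\bar\mu=0$. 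Combined with the requirement $\int_L f_0\,d\mu_L\ge 1$ on every $L\in\mathcal{E}$, this would yield $\bar\mu(U)=0$.

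The main difficulty is the construction step in $(\Rightarrow)$: leaves $L\notin\mathcal{E}$ may intersect $U$ in a set of positive $\mu_L$-measure, so a naive truncation of $f_0$ on $U$ destroys admissibility on $\mathcal{F}\setminus\mathcal{E}$, while $g_n$ only provides a tiny controlled contribution to integrals over leaves of $\mathcal{E}$. A successful competitor therefore requires a careful selection of the modification region, for example via a level-set decomposition of $f_0\chi_U$, coupled with repeated use of the subadditivity of the $p$-modulus to handle the transversal distribution of leaves from $\mathcal{E}$.
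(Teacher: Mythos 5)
The paper itself does not prove this proposition --- it is quoted from \cite{mc1} --- so your argument can only be judged on its own merits. Your direction ($\Leftarrow$) is the standard Fuglede existence argument (convexity of ${\rm adm}(\mathcal{F})$ plus Clarkson's inequalities to make a minimizing sequence Cauchy, Fuglede's lemma to pass the leaf integrals to the limit off an exceptional subfamily $\mathcal{E}_0$, then a modification on the $\bar{\mu}$--null set $\bigcup\mathcal{E}_0$), and it is essentially correct. The only technical care needed is to enclose $\bigcup\mathcal{E}_0$ in a Borel null set $E$ and add there a finite--a.e.\ function $h\in L^p(M)$ with $\int_L h\,d\mu_L=\infty$ for all $L\in\mathcal{E}_0$, rather than literally assigning the value $+\infty$, since admissible functions are real--valued; also the convergence of leaf integrals holds only along a subsequence.

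The direction ($\Rightarrow$) is where the proof is genuinely missing, and your diagnosis of the difficulty is off in both directions. First, the obstacle you name --- that a leaf $L\notin\mathcal{E}$ might meet $U=\bigcup\mathcal{E}$ in a set of positive $\mu_L$--measure --- cannot occur, because $\mathcal{F}$ is a foliation: distinct leaves are disjoint, so $U$ is saturated and $L\cap U=\emptyset$ for every $L\notin\mathcal{E}$. Replacing $\mathcal{E}$ by the larger, still exceptional and saturated family $\mathcal{E}'=\{L:\int_L g\,d\mu_L=\infty\}$ for a witness $g\in L^p(M)$ (whose union $U'=\{\hat{g}=\infty\}$ is measurable), the competitor $f_0\chi_{M\setminus U'}+n^{-1}g\chi_{U'}$ is admissible outright, and minimality of $f_0$ immediately forces $\int_{U'}f_0^p\,d\bar{\mu}=0$; no delicate choice of modification region is needed. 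Second, and more seriously, your concluding inference is a non sequitur: knowing $f_0=0$ $\bar{\mu}$--a.e.\ on $U$ together with $\int_L f_0\,d\mu_L\geq 1$ for every $L\in\mathcal{E}$ does not yield $\bar{\mu}(U)=0$, because each individual leaf is itself $\bar{\mu}$--null, so the set $U\cap\{f_0>0\}$ can have $\bar{\mu}$--measure zero while still meeting every leaf of $\mathcal{E}$ in a set of positive $\mu_L$--measure. What is actually needed to close the argument is the property that $f_0>0$ $\bar{\mu}$--almost everywhere on $M$ (listed in the paper as a property of the extremal function, likewise taken from \cite{mc1}); without an independent proof of that fact, your implication ($\Rightarrow$) remains open.
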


Moreover extremal function has two important properties \cite{mc1}
\begin{enumerate}
\item $f_0>0$ almost everywhere,
\item $\int_L f_0\,d\mu_L=1$ for almost every leaf $L\in\mathcal{F}$.
\end{enumerate}

In the case, when a foliation $\mathcal{F}$ is given by the level sets of a submersion $\Phi:M\to N$, the extremal function for the $p$--modulus of $\mathcal{F}$, if exists, equals \cite{kp}
\begin{equation}\label{eq:extfunsubmersion}
f_0=\frac{(J\Phi)^{q-1}}{\int_{\Phi^{-1}\circ\Phi}(J\Phi)^{q-1}\,d\mu_{\Phi^{-1}\circ\Phi}}
\end{equation}
and
\begin{equation}\label{eq:modsubmersion}
{\rm mod}_p(\mathcal{F})=\int_N \left( \int_{\Phi^{-1}(y)} (J\Phi)^{q-1}\,d\mu_{\Phi^{-1}(y)}\right)^{1-p}\,d\mu_N(y),
\end{equation}
where $p$ and $q$ are conjugate coefficients, $p+q=pq$.

Now we will slightly improve some of the results in \cite{mc1}. Let us begin with some notation and facts. We say that a family $\mathcal{E}\subset\mathcal{F}$ is $p$--{\it exceptional} if its $p$--modulus is equal to $0$. Moreover, we say that a property $(P)$ holds $p$--almost everywhere (with respect to $\mathcal{F}$) if there is $p$--exceptional family $\mathcal{E}\subset\mathcal{F}$ such that $(P)$ holds for $\mathcal{F}\setminus\mathcal{E}$. Notice that if there is the extremal function for the $p$--modulus of $\mathcal{F}$ then any $p$--exceptional family is of measure zero. One can show \cite{bf} that if $f\in L^p(M)$, then $f\in L^1(L)$ for $p$--almost every $L\in\mathcal{F}$.

For any $\varphi\in L^p(M)$ let
\begin{equation*}
\hat{\varphi}(x)=\int_{L_x} \varphi\,d\mu_{L_x},
\end{equation*}
where $L_x\in\mathcal{F}$ is a leaf through $x\in M$. Function $\hat{\varphi}$ is defined $p$--almost everywhere and almost everywhere if there is the extremal function for the $p$--modulus of $\mathcal{F}$. We will often write $(f)^{\widehat{\,}}$ instead of $\hat{f}$.

Let us recall Badger's result \cite{mb} on the necessary and sufficient condition of existence of extremal function, which in our case takes the following form.
\begin{thm}\label{thm:badger}
There is the extremal function for the $p$--modulus of $\mathcal{F}$ if and only if
\begin{enumerate}
\item $\hat{f_0}$=1 (almost everywhere),
\item for any $\varphi\in L^p(M)$ such that $\hat{\varphi}\geq 0$ we have $\int_M f_0^{p-1}\varphi\,d\bar{\mu}\geq 0$.
\end{enumerate}
\end{thm}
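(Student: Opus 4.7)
The plan is to prove the equivalence by treating the two implications separately; the forward direction is a first-order optimality argument for a convex variational problem with a linear constraint, and the reverse direction is a one-line convexity estimate.

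For the forward direction, suppose $f_0$ is the extremal function. Condition (1) is essentially contained in the previously stated properties: $\int_L f_0\,d\mu_L=1$ for $p$-almost every leaf $L$, and since an extremal function exists, any $p$-exceptional family has $\bar\mu$-measure zero, so $\hat{f_0}=1$ almost everywhere on $M$. For condition (2), given $\varphi\in L^p(M)$ with $\hat\varphi\geq 0$, the idea is to test extremality against the perturbation $g_t=(f_0+t\varphi)^+$, $t>0$. Since $g_t\geq f_0+t\varphi$ pointwise, integrating over any leaf gives $\hat{g_t}\geq 1+t\hat\varphi\geq 1$, so $g_t\in{\rm adm}(\mathcal{F})$. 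Extremality then yields
\begin{equation*}
\frac{1}{t}\int_M\bigl((g_t)^p-f_0^p\bigr)\,d\bar\mu\geq 0,
\end{equation*}
and sending $t\to 0^+$ should produce $p\int_M f_0^{p-1}\varphi\,d\bar\mu\geq 0$.

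Passing to the limit requires a careful split. On the set $\{f_0+t\varphi\geq 0\}$ the difference quotient converges pointwise to $pf_0^{p-1}\varphi$ and, via the mean value theorem, is dominated by $p(f_0+|\varphi|)^{p-1}|\varphi|$, which is integrable by H\"older since both $f_0$ and $\varphi$ lie in $L^p(M)$. On the complementary set the integrand is $-f_0^p/t$, but $f_0+t\varphi<0$ forces $f_0<t|\varphi|$, so $f_0^p/t<t^{p-1}|\varphi|^p$ and this contribution is $O(t^{p-1})\to 0$ as $t\to 0^+$. Dominated convergence then completes the verification of (2).

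The reverse direction is immediate from convexity. Assume (1) and (2). Since $f_0\geq 0$ and $\hat{f_0}=1$ almost everywhere, $f_0$ is admissible (up to a $p$-exceptional family, which does not affect the $p$-modulus). For any admissible $f\in L^p(M)$ (otherwise $\int_M f^p\,d\bar\mu=\infty$ and there is nothing to prove), apply the pointwise convexity inequality $f^p\geq f_0^p+pf_0^{p-1}(f-f_0)$; integrating over $M$ and using hypothesis (2) with $\varphi=f-f_0\in L^p(M)$, whose leaf integral satisfies $\hat\varphi=\hat{f}-1\geq 0$, eliminates the first-order term and yields $\int_M f^p\,d\bar\mu\geq\int_M f_0^p\,d\bar\mu$, so $f_0$ is extremal. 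The main technical obstacle is the passage to the limit in the forward direction on the set where $f_0+t\varphi$ becomes negative; the key observation that makes it go through is that this set is contained in $\{f_0<t|\varphi|\}$, and the factor $p>1$ supplies the necessary $t^{p-1}$ decay.
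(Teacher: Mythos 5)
Your proof is correct, but there is no in-paper argument to compare it against: the paper states Theorem \ref{thm:badger} as a quotation of Badger's result \cite{mb} (``Let us recall Badger's result\dots'') and gives no proof. Your argument is the classical Beurling-criterion proof, and both halves are sound: the competitor $(f_0+t\varphi)^{+}$ together with the split over $\{f_0+t\varphi\geq 0\}$ and its complement (where the inclusion $\{f_0+t\varphi<0\}\subset\{f_0<t|\varphi|\}$ supplies the $t^{p-1}$ decay) correctly produces the first-order inequality $p\int_M f_0^{p-1}\varphi\,d\bar{\mu}\geq 0$, and the convexity estimate $f^p\geq f_0^p+pf_0^{p-1}(f-f_0)$ applied with $\varphi=f-f_0$ gives sufficiency. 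Two small points should be made explicit. First, admissibility of $(f_0+t\varphi)^{+}$ holds only for $p$-almost every leaf, since $\hat{f_0}=1$ and the finiteness of $\int_L|\varphi|\,d\mu_L$ are guaranteed only off a $p$-exceptional subfamily; you therefore need the standard fact that deleting a $p$-exceptional subfamily does not change ${\rm mod}_p$ --- you invoke this in the reverse direction but silently assert $g_t\in{\rm adm}(\mathcal{F})$ in the forward one. Second, the theorem implicitly presupposes that $f_0$ is a nonnegative Borel function in $L^p(M)$; both the admissibility of $f_0$ in the reverse direction and the pointwise convexity inequality use $f_0\geq 0$, so this standing hypothesis should be stated. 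Neither point affects the validity of the argument.
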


Let
\begin{equation*}
L^p(\mathcal{F})=\{\varphi\in L^p(M)\mid {\rm esssup}|\hat{\varphi}|<\infty\}.
\end{equation*}
We have the following generalization of an integral formula obtained by the first author \cite{mc1}. This can be seen as a generalization of the Fubini theorem for a submersion, so called, co--area formula.
\begin{prop}
Assume there is extremal function $f_0$ for the $p$--modulus of $\mathcal{F}$. Then
\begin{equation}\label{eq:firstintegralfor}
\int_M f_0^{p-1}\varphi\hat{\psi}\,d\bar{\mu}=\int_M f_0^{p-1}\hat{\varphi}\psi\,d\bar{\mu}
=\int_M f_0^p \hat{\varphi}\hat{\psi}\,d\bar{\mu}.
\end{equation}
for any $\varphi,\psi\in L^p(\mathcal{F})$. In particular,
\begin{equation}\label{eq:secondintegralfor}
\int_M f_0^{p-1}\varphi\,d\bar{\mu}=\int_M f_0^p\hat{\varphi}\,d\bar{\mu},\quad \varphi\in L^p(\mathcal{F}).
\end{equation}
\end{prop}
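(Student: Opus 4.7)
The plan is to establish the special case \eqref{eq:secondintegralfor} first, and then to obtain the full three-way identity \eqref{eq:firstintegralfor} by substituting into the special case. Throughout I will use two facts already recorded in the excerpt: that the extremal function satisfies $\int_L f_0\,d\mu_L=1$ for almost every leaf $L$, and Badger's characterization (Theorem \ref{thm:badger}): if $\eta\in L^p(M)$ has $\hat\eta\ge 0$ almost everywhere, then $\int_M f_0^{p-1}\eta\,d\bar\mu\ge 0$.

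For \eqref{eq:secondintegralfor}, the key observation is that $\hat\varphi$ is constant along each leaf (its value at $x$ depends only on $L_x$), so for any $\varphi\in L^p(\mathcal{F})$ the function
\begin{equation*}
\eta:=\varphi - f_0\hat\varphi
\end{equation*}
satisfies, on each leaf $L$ where $\int_L f_0\,d\mu_L=1$,
\begin{equation*}
\hat\eta(x)=\hat\varphi(x)-\hat\varphi(x)\int_{L_x}f_0\,d\mu_{L_x}=0.
\end{equation*}
Thus $\hat\eta=0$ almost everywhere. Since $\hat\varphi$ is essentially bounded and $f_0\in L^p(M)$, we have $\eta\in L^p(M)$. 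Applying Badger's criterion to $\eta$ and to $-\eta$ yields
\begin{equation*}
\int_M f_0^{p-1}\eta\,d\bar\mu=0,
\end{equation*}
which, after rearrangement, is exactly \eqref{eq:secondintegralfor}.

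To pass from \eqref{eq:secondintegralfor} to \eqref{eq:firstintegralfor}, the same observation that $\hat\psi$ is constant on leaves yields, for $\varphi,\psi\in L^p(\mathcal{F})$,
\begin{equation*}
\widehat{\varphi\hat\psi}(x)=\hat\psi(x)\int_{L_x}\varphi\,d\mu_{L_x}=\hat\varphi(x)\hat\psi(x),
\end{equation*}
which is essentially bounded since both $\hat\varphi$ and $\hat\psi$ are; hence $\varphi\hat\psi\in L^p(\mathcal{F})$ and one may substitute $\varphi\hat\psi$ in place of $\varphi$ in \eqref{eq:secondintegralfor} to obtain the first equality of \eqref{eq:firstintegralfor}. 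Swapping the roles of $\varphi$ and $\psi$ produces the second equality.

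The only genuinely delicate step is the identity $\hat\eta=0$, which depends on having the leaf-integral normalization $\int_L f_0\,d\mu_L=1$ hold almost everywhere rather than merely $p$--almost everywhere, so that the conclusion $\hat\eta=0$ is in fact almost-everywhere valid (a point guaranteed by the observation at the beginning of this section that when the extremal function exists, every $p$--exceptional family has measure zero). The remaining verifications, that $\eta\in L^p(M)$ and $\varphi\hat\psi\in L^p(\mathcal{F})$, are straightforward since $\hat\varphi,\hat\psi\in L^\infty$ and $f_0\in L^p(M)$; everything else is bookkeeping.
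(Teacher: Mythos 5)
Your proof is correct and follows essentially the same route as the paper: both apply Badger's criterion to $\pm(\varphi-f_0\hat\varphi)$, whose leafwise integral vanishes by the normalization $\int_L f_0\,d\mu_L=1$, to get \eqref{eq:secondintegralfor}, and then substitute $\varphi\hat\psi$ and $\hat\varphi\psi$ to deduce \eqref{eq:firstintegralfor}. Your write-up is in fact slightly more careful than the paper's about the almost-everywhere versus $p$--almost-everywhere distinction and the integrability checks.
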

\begin{proof}
Let $\varphi\in L^p(M)$ and $\hat{\varphi}=0$. Then $\pm\varphi$ satisfies the condition $(2)$ of Badger's theorem \ref{thm:badger}. Therefore $\pm\int_M f_0^{p-1}\varphi\,d\bar{\mu}\geq 0$, which implies $\int_M f_0^{p-1}\varphi\,d\bar{\mu}=0$. Now let $\varphi\in L^p(\mathcal{F})$. Then $\varphi-f_0\hat{\varphi}\in L^p(\mathcal{F})$ and $(\varphi-f_0\hat{\varphi})^{\widehat{\,}}=0$. Thus by above
\begin{equation*}
\int_M f_0^{p-1}(\varphi-f_0\hat{\varphi})=0,
\end{equation*}
which proves \eqref{eq:secondvariation}. Replacing $\varphi$ by $\hat{\varphi}\psi$ and then by $\varphi\hat{\psi}$ for $\varphi,\psi\in L^p(\mathcal{F})$, we get \eqref{eq:firstintegralfor}.
\end{proof}

Now we pass to the variation of $p$--modulus. Let $X$ be the compactly supported vector field on $M$, $\varphi_t$ the flow of $X$. Let $\mathcal{F}_t=\varphi_t(\mathcal{F})$. We say that $X$ is {\rm admissible} for the $p$--modulus of $\mathcal{F}$ if there is an open neighborhood $I=(-\varepsilon,\varepsilon)\subset\mathbb{R}$ such that
\begin{enumerate}
\item[A1)] there is the extremal function $f_t$ for the $p$--modulus of $\mathcal{F}_t$ for $t\in I$,
\item[A2)] the function $\alpha(x,t)=(f_t\circ\varphi_t)(x)$ is $C^2$--smooth with respect to variable $t\in I$, 
\item[A3)] the functions $\alpha,\frac{\partial \alpha}{\partial t}$ and $\frac{\partial^2\alpha}{\partial t^2}$, $t\in I$, are dominated by functions in $L^p(\mathcal{F})$ (independent of $t$).
\end{enumerate}

Moreover, we say that $\mathcal{F}$ is $p$--{\it admissible} if any compactly supported vector field on $M$ is admissible for the $p$--modulus of $\mathcal{F}$. The following theorem states the existence of $p$--admissible foliations. The proof of below result can be found in \cite{mc1} without the assumption on the second derivative of $\alpha$ in the definition of admissibility, but the proof in this case follows the same lines.  

\begin{thm}
Assume $\mathcal{F}$ is given by the level sets of a submersion $\Phi:M\to N$ such that $C_1<J\Phi<C_2$ for some positive constants $C_1,C_2$ and there exists extremal function for the $p$--modulus of $\mathcal{F}$. Then $\mathcal{F}$ is $p$--admissible.
\end{thm}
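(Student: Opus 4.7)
The plan is to realize $\mathcal{F}_t$ as the level sets of a modified submersion and then use the explicit formula \eqref{eq:extfunsubmersion} for the extremal function, together with uniform bounds on the Jacobian, to check A1)--A3) directly.

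First I would observe that since $\varphi_t$ is a diffeomorphism for every $t$, the foliation $\mathcal{F}_t=\varphi_t(\mathcal{F})$ is given by the level sets of the submersion $\Phi_t=\Phi\circ\varphi_{-t}:M\to N$. Because $X$ is compactly supported, $\varphi_t$ differs from the identity only on a fixed compact set, and by the smoothness of the flow, on a neighborhood $I=(-\varepsilon,\varepsilon)$ one has $J\Phi_t\in(C_1',C_2')$ with constants independent of $t\in I$, and both $J\Phi_t$ and its $t$-derivatives up to order two are jointly continuous on $M\times I$. This ensures that hypotheses for the existence of an extremal function for $\mathcal{F}_t$ hold uniformly in $t$, giving A1).

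Next, using \eqref{eq:extfunsubmersion} I would write
\begin{equation*}
\alpha(x,t)=(f_t\circ\varphi_t)(x)=\frac{((J\Phi_t)\circ\varphi_t)(x)^{q-1}}{\displaystyle\int_{\Phi^{-1}(\Phi(x))}(J\Phi_t)^{q-1}\,d\mu_{\Phi^{-1}(\Phi(x))}},
\end{equation*}
where I have used $\Phi_t\circ\varphi_t=\Phi$ so that the leaf appearing in the denominator is the fixed original leaf $L_x=\Phi^{-1}(\Phi(x))\in\mathcal{F}$, independent of $t$. This is the crucial simplification: the domain of integration no longer depends on $t$, so the $t$-regularity of $\alpha$ is reduced to $t$-regularity of the integrand. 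Differentiation under the integral sign is justified by the uniform bounds on $J\Phi_t$ and its derivatives together with the fact that $\varphi_t$ preserves compactness of the support; this gives A2), with explicit expressions for $\partial_t\alpha$ and $\partial_t^2\alpha$.

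Finally, for A3) I would use the uniform bounds: both numerator and denominator in the formula above, as well as their first and second $t$-derivatives, are dominated pointwise on $M\times I$ by constants on compact sets. Off the compact set where $\varphi_t=\mathrm{id}$, $\alpha(\cdot,t)$ coincides with $f_0$ and its $t$-derivatives vanish. Hence $\alpha,\partial_t\alpha,\partial_t^2\alpha$ are dominated by a function of the form $c_1 f_0+c_2 \chi_K$ where $K$ is a fixed compact set; this lies in $L^p(\mathcal{F})$, because $f_0\in L^p(M)$ with $\widehat{f_0}=1$ (so $f_0\in L^p(\mathcal{F})$), and $\chi_K\in L^p(\mathcal{F})$ by the bound $J\Phi>C_1$ which forces the leaves of $\mathcal{F}$ to have uniformly bounded volume on $K$. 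The main obstacle I expect is this last step: controlling the leaf integrals $\widehat{\,\cdot\,}$ of the dominating functions, which is exactly where the two-sided bound on $J\Phi$ is used.
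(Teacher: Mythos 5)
The paper does not actually prove this theorem---it defers to \cite{mc1} and only remarks that adding the second--derivative requirement ``follows the same lines''---so your proposal has to stand on its own. Your overall strategy (realize $\mathcal{F}_t$ as the level sets of $\Phi_t=\Phi\circ\varphi_{-t}$, write $\alpha$ via \eqref{eq:extfunsubmersion}, and exploit the uniform Jacobian bounds) is sound, but two concrete steps are wrong. First, the displayed formula for $\alpha(x,t)$: from $\Phi_t\circ\varphi_t=\Phi$ one gets $\Phi_t^{-1}(\Phi_t(\varphi_t(x)))=\varphi_t(L_x)$, so the denominator of \eqref{eq:extfunsubmersion} is an integral over the \emph{moving} leaf $\varphi_t(L_x)$, not over the fixed leaf $L_x$. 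To obtain a $t$--independent domain you must change variables, which inserts the leafwise Jacobian:
\[
\alpha(x,t)=\frac{\bigl((J\Phi_t)\circ\varphi_t\bigr)(x)^{q-1}}{\int_{L_x}\bigl((J\Phi_t)^{q-1}\circ\varphi_t\bigr)\,J^0\varphi_t\,d\mu_{L_x}}.
\]
The extra factor $J^0\varphi_t$ is harmless (smooth in $t$, equal to $1$ off $\operatorname{supp}X$, uniformly bounded together with its $t$--derivatives), but without it your expression is not $f_t\circ\varphi_t$.

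Second---and this is precisely the step you yourself flag as the main obstacle---the claim that off a compact set $\alpha(\cdot,t)$ coincides with $f_0$ with vanishing $t$--derivatives is false. The numerator is indeed unchanged off $\operatorname{supp}X$, but the denominator $D_t(L)=\int_{\varphi_t(L)}(J\Phi_t)^{q-1}$ is a leafwise integral and varies with $t$ for \emph{every} leaf meeting $\operatorname{supp}X$; hence $\partial_t\alpha$ is generically nonzero along the whole of each such leaf, not only on $K$. Moreover $\chi_K\in L^p(\mathcal{F})$ is not justified: the coarea formula controls the average of $\mu_L(L\cap K)$ over leaves, not its essential supremum, so $\operatorname{esssup}\widehat{\chi_K}<\infty$ does not follow from $J\Phi>C_1$. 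Both problems disappear if you dominate globally by a multiple of $f_0$: the integrands of $D_t$ and $D_0$ are pointwise comparable, so $D_t\asymp D_0$ uniformly for $t\in I$, while $|\partial_tD_t|+|\partial_t^2D_t|\le C\,\mu_L(L\cap K)\le C\,C_1^{1-q}D_0(L)$ and the numerator with its $t$--derivatives is bounded by $C(J\Phi)^{q-1}$; together these give $|\alpha|+|\partial_t\alpha|+|\partial_t^2\alpha|\le Cf_0$, and $f_0\in L^p(\mathcal{F})$ because $\widehat{f_0}=1$. Finally, for A1) note that the uniform Jacobian bounds alone do not produce the extremal function for $\mathcal{F}_t$ (existence for $\mathcal{F}$ is a separate hypothesis of the theorem); you should transfer it from $\mathcal{F}$ to $\mathcal{F}_t$ via the characterization by $p$--exceptional families, using that the diffeomorphism $\varphi_t$ carries $p$--exceptional subfamilies to $p$--exceptional subfamilies and null sets to null sets.
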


Now we can state the formula for the first variation of the $p$--modulus \cite{mc1}.

\begin{thm}
Let $\mathcal{F}$ be foliation on $M$ and $X$ compactly supported vector field admissible for the $p$--modulus of $\mathcal{F}$. Assume that the extremal function $f_0$ for the $p$--modulus of $\mathcal{F}$ is $C^1$--smooth. Then
\begin{equation}\label{eq:firstvariation}
\frac{d}{dt}{\rm mod}_p(\mathcal{F}_t)_{t=0}=-p\int_M f_0^{p-1}(Xf_0+f_0{\rm div}_{\mathcal{F}}(X))\,d\bar{\mu},
\end{equation}
where ${\rm div}_{\mathcal{F}}X$ denoted the leafwise divergence of $X$.
\end{thm}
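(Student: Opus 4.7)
The plan is to reduce the problem to differentiating under an integral on the fixed manifold $M$ via the change of variables given by $\varphi_t$, and then use the leafwise admissibility constraint together with the auxiliary identity established in the proof of the preceding Proposition to eliminate the unknown time-derivative of the extremal function. First, the change of variables $y=\varphi_t(x)$ gives
\begin{equation*}
{\rm mod}_p(\mathcal{F}_t)=\int_M f_t^p\,d\bar{\mu}=\int_M\alpha(x,t)^p\,J\varphi_t(x)\,d\bar{\mu}(x),
\end{equation*}
where $\alpha(x,t)=f_t(\varphi_t(x))$ and $J\varphi_t$ denotes the full Jacobian of $\varphi_t$. By (A3) together with the smoothness of $J\varphi_t$ in $t$, one may differentiate under the integral at $t=0$; using $J\varphi_0=1$ and $\tfrac{d}{dt}J\varphi_t|_{t=0}={\rm div}(X)$, this yields
\begin{equation*}
\frac{d}{dt}{\rm mod}_p(\mathcal{F}_t)\Big|_{t=0}=p\int_M f_0^{p-1}\dot{\alpha}(x,0)\,d\bar{\mu}+\int_M f_0^p\,{\rm div}(X)\,d\bar{\mu},
\end{equation*}
with $\dot{\alpha}(\cdot,0)=Xf_0+\dot f_0$ and $\dot f_0(x):=\tfrac{\partial f_t}{\partial t}(x)|_{t=0}$.

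The crucial step is eliminating $\dot f_0$. The extremality of each $f_t$ gives $\int_{\varphi_t(L)}f_t\,d\mu_{\varphi_t(L)}=1$ for $p$-almost every $L\in\mathcal{F}$; a leafwise change of variables rewrites this as $\int_L\alpha(x,t)\,J^{\mathcal{F}}\varphi_t(x)\,d\mu_L(x)=1$, where $J^{\mathcal{F}}\varphi_t$ is the Jacobian of the restriction $\varphi_t|_L$. Differentiating at $t=0$, and using $\tfrac{d}{dt}J^{\mathcal{F}}\varphi_t|_{t=0}={\rm div}_{\mathcal{F}}(X)$, yields
\begin{equation*}
\int_L\bigl(\dot\alpha(\cdot,0)+f_0\,{\rm div}_{\mathcal{F}}(X)\bigr)\,d\mu_L=0,
\end{equation*}
i.e.\ the function $h:=\dot\alpha(\cdot,0)+f_0\,{\rm div}_{\mathcal{F}}(X)$ satisfies $\hat h=0$. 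Since $h\in L^p(M)$ (the first summand is $L^p(\mathcal{F})$-dominated by (A3), the second is a product of an $L^p$ function with a bounded, compactly supported one), the auxiliary fact extracted in the proof of the preceding Proposition — namely, $\hat\varphi=0$ together with $\varphi\in L^p(M)$ forces $\int_M f_0^{p-1}\varphi\,d\bar{\mu}=0$ — applied to $\varphi=h$ gives
\begin{equation*}
\int_M f_0^{p-1}\dot\alpha(\cdot,0)\,d\bar{\mu}=-\int_M f_0^p\,{\rm div}_{\mathcal{F}}(X)\,d\bar{\mu}.
\end{equation*}

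Substituting this back into the expression for the derivative of the modulus, and integrating by parts against the compactly supported $X$ (legitimate since $f_0\in C^1$) to rewrite $\int_M f_0^p\,{\rm div}(X)\,d\bar{\mu}=-p\int_M f_0^{p-1}Xf_0\,d\bar{\mu}$, the two remaining terms combine to yield exactly formula \eqref{eq:firstvariation}. The main obstacle I anticipate is the rigorous justification of the two differentiations under the integral sign: condition (A3) is tailor-made to provide the $L^p$-dominant needed globally, while for the leafwise differentiation one must invoke the fact (recalled just above) that an $L^p(M)$ function restricts to an $L^1(L)$ function on $p$-almost every leaf, so the $L^p(\mathcal{F})$-dominant produces an $L^1(L)$-dominant there. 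Observe that $\dot f_0$ itself is never computed — it is absorbed entirely into the leafwise constraint, which is precisely the feature that makes the auxiliary identity of the previous Proposition the right tool for this calculation.
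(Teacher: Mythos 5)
Your proof is correct and follows essentially the same route as the paper's (the paper defers the proof to \cite{mc1}, but the scheme is identical to the one used here for the second variation): the pull-back by $\varphi_t$, differentiation under the integral justified by (A3), and the elimination of $\dot f_0$ via the leafwise constraint are exactly the first relation of Lemma \ref{lem:hatdftdt} combined with the integral formula \eqref{eq:secondintegralfor}, whose proof is the auxiliary fact you invoke. Nothing further is needed.
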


We say that a $p$--admissible foliation $\mathcal{F}$ is a {\it critical point} of $p$--modulus functional if the first variation \eqref{eq:firstvariation} vanishes for any compactly supported vector field $X$. Since \eqref{eq:firstvariation} can be written in the form
\begin{equation*}
\frac{d}{dt}{\rm mod}_p(\mathcal{F}_t)_{t=0}=p\int_M g\left( \left(\frac{1}{q}\nabla^{\top}\log f_0^p-H_{\mathcal{F}^{\bot}}\right)-\left(\frac{1}{p}\nabla^{\bot}\log f_0^p-H_{\mathcal{F}}\right),X \right)\,d\bar{\mu},
\end{equation*}
where $H_{\mathcal{F}}$ and $H_{\mathcal{F}^{\bot}}$ denote the mean curvature vector of foliation $\mathcal{F}$ and distribution orthogonal $\mathcal{F}^{\bot}$, respectively, we get that $\mathcal{F}$ is a critical point of the $p$--modulus functional if and only if 
\begin{equation}\label{eq:f0criticalmain}
\nabla \log f_0^p=qH_{\mathcal{F}^{\bot}}+pH_{\mathcal{F}}.
\end{equation} 
Notice that the condition $\nabla^{\top} \log f_0^p=q H_{\mathcal{F}^{\bot}}$ holds trivially since the variation induced by a vector field $X$ tangent to $\mathcal{F}$ leaves $\mathcal{F}$ invariant. Therefore, the condition for the critical point reduces to the following
\begin{equation}\label{eq:criticalpoint}
\nabla^{\bot}\log f_0^p=pH_{\mathcal{F}}.
\end{equation}

\section{Second variation of $p$--modulus}

In this section we derive the formula for the second variation of the $p$--modulus. First we need some technical results.

Let $\mathcal{F}$ be a foliation on a Riemannian manifold $(M,g)$, $X$ a vector field on $M$ with the flow $\varphi_t$. Let $\mathcal{F}_t=\varphi_t(\mathcal{F})$ and $L_t=\varphi_t(L)$ for $L\in\mathcal{F}$. Denote by $J^r\varphi_t$ the Jacobian of the map $\varphi_t:L_r\to L_{r+t}$.

\begin{lem}\label{lem:tangentjacobian}
The following relations hold
\begin{align}
\frac{d}{dt}\left(J^0\varphi_t\right)_{t=r} &=J^0\varphi_r\cdot{\rm div}_{\mathcal{F}_r}X\circ\varphi_r,\label{eq:tech2}\\
\frac{d}{dt}\left(J^r\varphi_t\right)_{t=0} &={\rm div}_{\mathcal{F}_r}X,\label{eq:tech1}\\
\frac{d^2}{dt^2}\left(J^0\varphi_t\right)_{t=0} &=({\rm div}_{\mathcal{F}}X)^2-{\rm Ric}_{\mathcal{F}}(X)+{\rm div}_{\mathcal{F}}(\nabla_XX)+\sum_i|\nabla_{e_i}X|^2 \label{eq:tech3}\\
&-\sum_{i,j}g(\nabla_{e_i}X,e_j)^2
-\sum_{i,j}g(\nabla_{e_i}X,e_j)g(\nabla_{e_j}X,e_i),\notag
\end{align}
where $(e_i)$ is an orthonormal basis of $T\mathcal{F}$.
\end{lem}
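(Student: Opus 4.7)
The plan is to express the leafwise Jacobian as the square root of a Gram determinant and reduce everything to standard Riemannian variational calculus along the flow curve of $X$. Fix $x\in L$ and let $\gamma(t)=\varphi_t(x)$, so $\dot\gamma(t)=X(\gamma(t))$. Pick an orthonormal frame $(e_i)$ of $T_xL$ and set $V_i(t)=d\varphi_t(e_i)\in T_{\gamma(t)}M$; then
\begin{equation*}
J^0\varphi_t(x)=\sqrt{\det G(t)},\qquad G_{ij}(t)=g\bigl(V_i(t),V_j(t)\bigr),
\end{equation*}
so that $G(0)=\mathrm{Id}$ and $J^0\varphi_0=1$. The key kinematical fact is that $V_i(t)$ is a variation field of $\gamma$ through integral curves, so $\tfrac{D}{dt}V_i=\nabla_{V_i}X$ along $\gamma$; equivalently, extending $e_i$ to $E_i$ by $E_i\circ\varphi_t=V_i(t)$ gives a local frame with $[X,E_i]=0$.

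For \eqref{eq:tech1} at $r=0$, I differentiate $\log J^0\varphi_t=\tfrac12\log\det G$ once, getting $(\log J)'(0)=\tfrac12\mathrm{tr}\,G'(0)$, and the formula $\tfrac{D}{dt}V_i=\nabla_{V_i}X$ gives $G'_{ij}(0)=g(\nabla_{e_i}X,e_j)+g(\nabla_{e_j}X,e_i)$, whose trace is $2\,\mathrm{div}_{\mathcal F}X$. Hence $(J^0\varphi_t)'(0)=\mathrm{div}_{\mathcal F}X$, which applied to the leaf $L_r$ (replacing $\mathcal F$ by $\mathcal F_r$ and taking the flow starting at $t=0$ from $L_r$) gives \eqref{eq:tech1} in full generality. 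For \eqref{eq:tech2} I use the cocycle relation $\varphi_{r+t}=\varphi_t\circ\varphi_r$, which at the level of Jacobians reads
\begin{equation*}
J^0\varphi_{r+t}(x)=J^r\varphi_t(\varphi_r(x))\cdot J^0\varphi_r(x);
\end{equation*}
differentiating in $t$ at $t=0$ and substituting \eqref{eq:tech1} gives exactly \eqref{eq:tech2}.

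For \eqref{eq:tech3} I differentiate $\log\det G$ twice and use $G(0)=\mathrm{Id}$ to obtain
\begin{equation*}
(\log J)''(0)=-\tfrac12\sum_{i,j}G'_{ij}(0)^2+\tfrac12\sum_i G''_{ii}(0),
\end{equation*}
so that $(J^0\varphi_t)''(0)=(\mathrm{div}_{\mathcal F}X)^2+(\log J)''(0)$. Expanding the first sum with the symmetric expression for $G'(0)$ yields precisely the two negative quadratic terms in \eqref{eq:tech3}. For $G''_{ii}(0)$ I compute $\tfrac{D^2}{dt^2}V_i$ using the extension $E_i$ with $[X,E_i]=0$: then $\tfrac{D^2}{dt^2}V_i=\nabla_X\nabla_XE_i=\nabla_X\nabla_{E_i}X$, and the Ricci identity gives
\begin{equation*}
\nabla_X\nabla_{E_i}X=\nabla_{E_i}\nabla_XX+R(X,E_i)X.
\end{equation*}
Using the Leibniz rule for $G''_{ij}$, the diagonal contribution $\tfrac12\sum_i G''_{ii}(0)$ splits cleanly into $\mathrm{div}_{\mathcal F}(\nabla_XX)$, $-\mathrm{Ric}_{\mathcal F}(X)$ (from $\sum_i g(R(X,e_i)X,e_i)$), and $\sum_i|\nabla_{e_i}X|^2$ (from the cross term $2g(\tfrac{D}{dt}V_i,\tfrac{D}{dt}V_i)$). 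Assembling the pieces produces \eqref{eq:tech3}.

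The main obstacle is bookkeeping in the second variation: one must be careful that the frame $(e_i)$ spans $T\mathcal F$ only at the base point, so derivatives must be computed along $\gamma$ via an appropriate extension; the choice $[X,E_i]=0$ is what makes the curvature and Hessian terms separate cleanly and prevents spurious connection terms from appearing. Once this extension is fixed, the computation is a direct but careful expansion of $(\log\det G)''$ at the identity.
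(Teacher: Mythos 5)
Your proof is correct and follows essentially the same route as the paper: both write $J^0\varphi_t$ as the square root of the Gram determinant of $\varphi_{t\ast}e_i$, use the variation-field identity $\tfrac{D}{dt}\varphi_{t\ast}e_i=\nabla_{\varphi_{t\ast}e_i}X$ (the paper phrases this via the pull-back connection, you via $[X,E_i]=0$), commute derivatives through the curvature tensor for the second variation, and relate \eqref{eq:tech1} and \eqref{eq:tech2} by the cocycle identity $J^0\varphi_{r+t}=J^0\varphi_r\cdot(J^r\varphi_t\circ\varphi_r)$. The only differences are cosmetic: you differentiate $\log\det G$ instead of $\det Y_t$, and you deduce \eqref{eq:tech2} from \eqref{eq:tech1} rather than the reverse.
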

\begin{proof}
First let us recall the formulas for the differential of the determinant. Namely, for $1$--parameter family of matrices $Y_t$ such that $Y_0$ is an identity matrix, we have
\begin{align*}
\frac{d}{dt}(\det Y_t)_t &=\det (Y_t){\rm tr}\left(Y_t^{-1}\frac{d}{dt}Y_t\right),\\
\frac{d^2}{dt^2}(\det Y_t)_{t=0} &={\rm tr}\left(\frac{d^2}{dt^2}Y_t\right)_{t=0}+\left({\rm tr}\left(\frac{d}{dt}Y_t\right)_{t=0}\right)^2-{\rm tr}\left(\left(\frac{d}{dt}Y_t\right)^2_{t=0}\right).
\end{align*}
Let $Y_t$ be of the form
\begin{equation*}
(Y_t)_{ij}=g(\varphi_{t\ast}e_i,\varphi_{t\ast}e_j).
\end{equation*}
We will derive the first and second derivative of $Y_t$. Let $\phi(x,t)=\varphi_t(x)$ and consider the pull--back bundle $\phi^{-1}TM$ over $M\times\mathbb{R}$ with the fibers $(\phi^{-1}TM)_{(x,t)}=T_{\varphi_t(x)}M$. There is unique connection $\nabla^{\phi}$ in this bundle satisfying
\begin{equation*}
\nabla^{\phi}_W (Y\circ\phi)=\nabla_{\phi_{\ast}W}Y,\quad W\in T(M\times\mathbb{R}),\quad Y\in\Gamma(TM).
\end{equation*}
Moreover, one can show that \cite{bw}
\begin{equation*}
\nabla^{\phi}_W\phi_{\ast}Z-\nabla^{\phi}_Z\phi_{\ast}W=\phi_{\ast}[W,Z],
\quad W,Z\in\Gamma(T(M\times\mathbb{R})).
\end{equation*}
Since $[e_i,\frac{d}{dt}]=0$ we get
\begin{align*}
\frac{d}{dt}(Y_t)_{ij} &=\frac{d}{dt}g(\varphi_{t\ast}e_i,\varphi_{t\ast}e_j)\\
&=g(\nabla^{\phi}_{\frac{d}{dt}}\varphi_{t\ast}e_i,\varphi_{t\ast}e_j)
+g(\nabla^{\phi}_{\frac{d}{dt}}\varphi_{t\ast}e_j,\varphi_{t\ast}e_i)\\
&=g(\nabla^{\phi}_{e_i}\phi_{\ast}\frac{d}{dt},\varphi_{t\ast}e_j)
+g(\nabla^{\phi}_{e_j}\phi_{\ast}\frac{d}{dt},\varphi_{t\ast}e_i).
\end{align*}
By the fact that $\phi_{\ast}\frac{d}{dt}=X\circ\phi$ we have
\begin{equation*}
\frac{d}{dt}(Y_t)_{ij}=g(\nabla_{\varphi_{t\ast}e_i}X,\varphi_{t\ast}e_j)
+g(\nabla_{\varphi_{t\ast}e_j}X,\varphi_{t\ast}e_i).
\end{equation*}
In particular 
\begin{equation*}
{\rm tr}\left(Y_t^{-1}\frac{d}{dt}Y_t\right)=
2\sum_{i,j}(g(\varphi_{t\ast}e_i,\varphi_{t\ast}e_j))^{-1}_{ij}g(\nabla_{\varphi_{t\ast}e_i}X,\varphi_{t\ast}e_j)
=2{\rm div}_{\mathcal{F}_t}X\circ\varphi_t
\end{equation*}
and hence
\begin{equation*}
{\rm tr}\left(\frac{d}{dt}Y_t\right)_{t=0}=2{\rm div}_{\mathcal{F}}X.
\end{equation*}
Therefore
\begin{align*}
\frac{d}{dt}(J^0\varphi_t)_t &=\frac{d}{dt}\sqrt{\det Y_t}=\frac{1}{2\sqrt{\det Y_t}}\frac{d}{dt}(\det Y_t)
=\frac{1}{2}\sqrt{\det Y_t}{\rm tr}(Y_t^{-1}\frac{d}{dt} Y_t)\\
&=J^0\varphi_t\cdot {\rm div}_{\mathcal{F}_t}X\circ\varphi_t.
\end{align*}
By the correspondence
\begin{equation*}
J^0\varphi_{r+t}=J^0\varphi_r\cdot (J^r\varphi_t\circ\varphi_r)
\end{equation*}
we obtain
\begin{align*}
\frac{d}{dt}(J^r\varphi_t)_{t=0}\circ\varphi_r &=\frac{\frac{d}{dt}(J^0\varphi_{r+t})_{t=0}}{J^0\varphi_r}
=\frac{\frac{d}{dt}(J^0\varphi_t)_{t=r}}{J^0\varphi_r}={\rm div}_{\mathcal{F}_r}X\circ\varphi_r.
\end{align*}
Furthermore, up to a symmetrization of indices $i,j$,
\begin{align*}
\left(\frac{d^2}{dt^2}(Y_t)_{ij}\right)_{t=0} &=\frac{d}{dt}g(\nabla_{\varphi_{t\ast}e_i}X,\varphi_{t\ast}e_j)_{t=0}\\
&=
g((\nabla^{\phi}_{\frac{d}{dt}}\nabla_{\varphi_{t\ast}e_i}X)_{t=0},e_j)
+g(\nabla_{\varphi_{t\ast}e_i}X,\nabla^{\phi}_{\frac{d}{dt}}\varphi_{t\ast}e_j)_{t=0}\\
&=g(\nabla^{\phi}_{\frac{d}{dt}}\nabla^{\phi}_{e_i}(X\circ\phi),e_j)+g(\nabla_{e_i}X,\nabla_{e_j}X)\\
&=g(R^{\phi}(\frac{d}{dt},e_i)(X\circ\varphi),e_j)+g(\nabla^{\phi}_{e_i}\nabla^{\phi}_{\frac{d}{dt}}(X\circ\phi),e_j)
+g(\nabla_{e_i}X,\nabla_{e_j}X)\\
&=g(R(X,e_i)X,e_j)+g(\nabla_{e_i}\nabla_XX,e_j)+g(\nabla_{e_i}X,\nabla_{e_j}X),
\end{align*}
where $R^{\phi}$ is the curvature tensor of the connection $\nabla^{\phi}$ and $R$ is the curvature tensor on $M$. Hence
\begin{align*}
\sum_i\left(\frac{d^2}{dt^2}(Y_t)_{ii}\right)_{t=0} &=2\left(-{\rm Ric}(X)+{\rm div}_{\mathcal{F}}(\nabla_XX)+\sum_i|\nabla_{e_i}X|^2\right). 
\end{align*}
Moreover,
\begin{align*}
{\rm tr}\left(\frac{d}{dt}Y_t\right)^2_{t=0} &=\sum_{i,j} \left(\left(\frac{d}{dt}Y_t\right)_{t=0}\right)_{ij}^2\\
&=\sum_{i,j}(g(\nabla_{e_i}X,e_j)+g(\nabla_{e_j}X,e_i))^2\\
&=2\sum_{i,j}(g(\nabla_{e_i}X,e_j)^2+g(\nabla_{e_i}X,e_j)g(\nabla_{e_j}X,e_i)).
\end{align*}
Since
\begin{align*}
\frac{d^2}{dt^2}(J^0\varphi_t)_{t=0} &=\frac{d^2}{dt^2}\left(\sqrt{\det Y_t}\right)_{t=0}\\
&=\frac{1}{2}\frac{d}{dt}\left( (\det Y_t)^{-\frac{1}{2}}\frac{d}{dt}(\det Y_t)\right)_{t=0}\\
&=\frac{1}{2}\left( -\frac{1}{2}\left(\frac{d}{dt}(\det Y_t)_{t=0}\right)^2+\frac{d^2}{dt^2}(\det Y_t)_{t=0} \right),
\end{align*}
by above we get
\begin{align*}
2\frac{d^2}{dt^2}(J^0\varphi_t)_{t=0} &=-\frac{1}{2}(2{\rm div}_{\mathcal{F}}X)^2-2\left(-{\rm Ric}(X)+{\rm div}_{\mathcal{F}}(\nabla_XX)+\sum_i|\nabla_{e_i}X|^2\right)\\
&+(2{\rm div}_{\mathcal{F}}X)^2-2\sum_{i,j}g(\nabla_{e_i}X,e_j)^2
-2\sum_{i,j}g(\nabla_{e_i}X,e_j)g(\nabla_{e_j}X,e_i).
\end{align*}
Finally
\begin{align*}
\frac{d^2}{dt^2}(J^0\varphi_t)_{t=0} &=({\rm div}_{\mathcal{F}}X)^2-{\rm Ric}_{\mathcal{F}}(X)+{\rm div}_{\mathcal{F}}(\nabla_XX)+\sum_i|\nabla_{e_i}X|^2\\
&-\sum_{i,j}g(\nabla_{e_i}X,e_j)^2
-\sum_{i,j}g(\nabla_{e_i}X,e_j)g(\nabla_{e_j}X,e_i).\qedhere
\end{align*}
\end{proof}

The same argument as in the proof of above lemma implies that the second derivative of the ''full'' Jacobian $J\varphi_t:M\to M$ equals
\begin{equation}\label{eq:fulljacobian}
\frac{d^2}{dt^2}(J\varphi_t)_{t=0}=(\rm div X)^2-{\rm Ric}(X)+{\rm div}(\nabla_XX)-{\rm tr}((\nabla X)^2).
\end{equation}

Assume now that $X$ is admissible for the $p$--modulus of $\mathcal{F}$. Recall the definition of the function $\alpha_t(x)=\alpha(t,x)$,
\begin{equation*}
\alpha_t(x)=(f_t\circ \varphi_t)(x)
\end{equation*}
where $f_t$ is the extremal function for the $p$--modulus of $\mathcal{F}_t$ and $\varphi_t$ is the flow of $X$. By admissibility of $X$ the function $t\mapsto \alpha_t$ is twice differentiable. 

\begin{lem}\label{lem:hatdftdt}
The following relations hold
\begin{align*}
&\left( \frac{d\alpha_t}{dt} \right)_{t=0}^{\widehat{\,}}=-\left(f_0{\rm div}_{\mathcal{F}}X\right)^{\widehat{\,}},\\
&\left(\left( \frac{d^2\alpha_t}{dt^2}\right)_{t=0}+2\left(\frac{d\alpha_t}{dt}\right)_{t=0}{\rm div}_{\mathcal{F}}X\right)^{\widehat{\,}}=-\left(f_0\frac{d^2}{dt^2}J^{\top}\varphi_t\right)^{\widehat{\,}}.
\end{align*}
\end{lem}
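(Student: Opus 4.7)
The plan is to start from the defining property of the extremal function, namely that $\int_{L_t} f_t\,d\mu_{L_t}=1$ for $p$--almost every leaf $L_t\in\mathcal{F}_t$, pull it back to the original foliation via the flow $\varphi_t$, and then differentiate once and twice in $t$ at $t=0$. The pullback is the change--of--variables identity
\begin{equation*}
\int_L (f_t\circ\varphi_t)\,J^0\varphi_t\,d\mu_L=\int_{L_t} f_t\,d\mu_{L_t}=1,
\end{equation*}
valid for $p$--almost every leaf $L\in\mathcal{F}$ (here we use $L_t=\varphi_t(L)$ and the tangential Jacobian $J^0\varphi_t$ introduced before Lemma~\ref{lem:tangentjacobian}). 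In the hat notation with respect to $\mathcal{F}$ this reads $(\alpha_t\,J^0\varphi_t)^{\widehat{\,}}=1$ almost everywhere in $M$.

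Next I would differentiate this identity in $t$ under the integral sign. The interchange of $\frac{d}{dt}$ and $\int_L$ is precisely what admissibility condition (A3) is designed for: $\alpha_t$, $\partial_t\alpha_t$, $\partial_t^2\alpha_t$ are dominated by a function in $L^p(\mathcal{F})$ uniformly in $t$, and $J^0\varphi_t$ together with its two $t$--derivatives is smooth and compactly controlled on the support of $X$, so the product and its first two $t$--derivatives admit an $L^1(L)$ dominant for $p$--almost every leaf. Differentiating $(\alpha_t J^0\varphi_t)^{\widehat{\,}}=1$ once at $t=0$ and using $J^0\varphi_0=1$, $\alpha_0=f_0$, together with formula \eqref{eq:tech2} at $r=0$ which gives $\frac{d}{dt}(J^0\varphi_t)_{t=0}={\rm div}_{\mathcal{F}}X$, yields
\begin{equation*}
\left(\tfrac{d\alpha_t}{dt}\big|_{t=0}+f_0\,{\rm div}_{\mathcal{F}}X\right)^{\widehat{\,}}=0,
\end{equation*}
which is the first claim.

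For the second identity I would differentiate the same relation twice and evaluate at $t=0$, obtaining by Leibniz
\begin{equation*}
\left(\tfrac{d^2\alpha_t}{dt^2}\big|_{t=0}+2\,\tfrac{d\alpha_t}{dt}\big|_{t=0}\tfrac{d}{dt}(J^0\varphi_t)_{t=0}+f_0\,\tfrac{d^2}{dt^2}(J^0\varphi_t)_{t=0}\right)^{\widehat{\,}}=0.
\end{equation*}
Substituting $\frac{d}{dt}(J^0\varphi_t)_{t=0}={\rm div}_{\mathcal{F}}X$ from \eqref{eq:tech2} and keeping $\frac{d^2}{dt^2}J^\top\varphi_t$ as a symbol (its explicit expansion being the content of \eqref{eq:tech3}) gives the required formula.

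The only non--routine point is the justification of differentiation under the integral sign over each leaf; once admissibility (A1)--(A3) is invoked, the rest reduces to the Leibniz rule applied to $(\alpha_t J^0\varphi_t)^{\widehat{\,}}=1$ together with the already proved Jacobian derivatives of Lemma~\ref{lem:tangentjacobian}. No further curvature or divergence identities are needed at this stage because the full second--order expansion of $J^\top\varphi_t$ is deliberately left in compact form, to be unpacked only when the second variation of ${\rm mod}_p$ is computed.
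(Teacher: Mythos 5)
Your proposal is correct and takes essentially the same approach as the paper: the paper's proof consists precisely of differentiating the pulled-back normalization $1=\widehat{f_t}^t\circ\varphi_t=\left((f_t\circ\varphi_t)J^0\varphi_t\right)^{\widehat{\,}}$ once and twice at $t=0$ and invoking Lemma~\ref{lem:tangentjacobian}. Your Leibniz computations match, and your explicit justification of differentiation under the leafwise integral via condition (A3) only fills in what the paper leaves implicit.
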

\begin{proof}
It suffices to differentiate the equation
\begin{equation*}
1=\widehat{f_t}^t\circ\varphi_t=\widehat{(f_t\circ\varphi_t)J^0\varphi_t}
\end{equation*}
and use Lemma \ref{lem:tangentjacobian}.
\end{proof}

\begin{lem}\label{lem:diffintfor}
Let $p\geq 2$ and let $\psi_t$ be $1$--parameter family of functions on $M$ such that for $t$ in some interval $(-\varepsilon,\varepsilon)$ we have
\begin{enumerate}
\item $\psi_t\in L^p(\mathcal{F}_t)$,
\item $\psi_t\circ\varphi_t$ is differentiable with respect to $t$,
\item $\psi_t\circ\varphi_t$, $\frac{d}{dt}(\psi_t\circ\varphi_t)$ are dominated by some functions in $L^p(\mathcal{F})$ (independent of $t\in(-\varepsilon,\varepsilon)$).
\end{enumerate}
Then the following formula holds
\begin{multline*}
(p-1)\int_M f_0^{p-2}\left(\frac{d\alpha_t}{dt}\right)_{t=0}\psi\,d\bar{\mu}\\
=(1-p)\int_M f_0^p\hat{\psi}{\rm div}_{\mathcal{F}}X\,d\bar{\mu}
+\int_M f_0^p\hat{\psi}{\rm div}_{\mathcal{F}^{\bot}}X\,d\bar{\mu}
-\int_M f_0^{p-1}\psi{\rm div}_{\mathcal{F}^{\bot}}X\,d\bar{\mu},
\end{multline*} 
where $\psi=\psi_0$.
\end{lem}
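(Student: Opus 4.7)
The plan is to start from identity \eqref{eq:secondintegralfor} applied at time $t$ to $\mathcal{F}_t$ and $\psi_t$,
\begin{equation*}
\int_M f_t^{p-1}\psi_t\,d\bar\mu=\int_M f_t^p\,\hat{\psi_t}^t\,d\bar\mu,
\end{equation*}
where the superscript $t$ indicates that the leafwise hat is taken with respect to $\mathcal{F}_t$. Setting $\beta_t=\psi_t\circ\varphi_t$ and pulling everything back via $\varphi_t$, the leafwise change of variables $\hat{\psi_t}^t\circ\varphi_t=(\beta_t J^0\varphi_t)^{\widehat{\,}}$, with hats on the right referring to $\mathcal{F}$, turns this into
\begin{equation*}
\int_M \alpha_t^{p-1}\beta_t\,J\varphi_t\,d\bar\mu=\int_M \alpha_t^p\,(\beta_t J^0\varphi_t)^{\widehat{\,}}\,J\varphi_t\,d\bar\mu.
\end{equation*}
The whole strategy is then to differentiate both sides at $t=0$ and reorganize using the integral formulas \eqref{eq:firstintegralfor}--\eqref{eq:secondintegralfor} together with the variational formulas of Lemmas~\ref{lem:tangentjacobian} and~\ref{lem:hatdftdt}.

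Upon differentiation, using $\alpha_0=f_0$, $\beta_0=\psi$, $J\varphi_0=J^0\varphi_0=1$, $\frac{d}{dt}J\varphi_t|_{t=0}={\rm div}\,X$, and \eqref{eq:tech2} yielding $\frac{d}{dt}(\beta_t J^0\varphi_t)^{\widehat{\,}}|_{t=0}=(\dot\beta_0+\psi\,{\rm div}_{\mathcal{F}}X)^{\widehat{\,}}$, the terms carrying $\dot\beta_0=(d\beta_t/dt)_{t=0}$ cancel between the two sides via \eqref{eq:secondintegralfor} with $\varphi=\dot\beta_0$, and \eqref{eq:secondintegralfor} with $\varphi=\psi\,{\rm div}_{\mathcal{F}}X$ handles the corresponding hatted term. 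The surviving coupling $\int_M f_0^{p-1}\dot\alpha_0\hat\psi\,d\bar\mu$ is rewritten purely in terms of $\psi$ by combining the swap \eqref{eq:firstintegralfor} with Lemma~\ref{lem:hatdftdt}: applying \eqref{eq:firstintegralfor} first, then substituting $(\dot\alpha_0)^{\widehat{\,}}=-(f_0\,{\rm div}_{\mathcal{F}}X)^{\widehat{\,}}$, and finally applying \eqref{eq:firstintegralfor} once more, one obtains $\int_M f_0^{p-1}\dot\alpha_0\hat\psi\,d\bar\mu=-\int_M f_0^p\,\hat\psi\,{\rm div}_{\mathcal{F}}X\,d\bar\mu$. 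Splitting ${\rm div}\,X={\rm div}_{\mathcal{F}}X+{\rm div}_{\mathcal{F}^{\bot}}X$ and collecting the pieces yields exactly the formula in the statement.

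The main obstacle is justifying differentiation under the integral sign for the pulled--back identity. This is where admissibility conditions A2)--A3) on $\alpha_t$, hypotheses (2)--(3) on $\psi_t$, and the Jacobian bounds implicit in Lemma~\ref{lem:tangentjacobian} must be combined to exhibit integrable majorants for the integrands and their $t$--derivatives. The assumption $p\geq 2$ enters precisely to keep the factor $f_0^{p-2}$ controllable, so that H\"older's inequality against the $L^p(\mathcal{F})$ dominators supplies the required majorants both before and after differentiation.
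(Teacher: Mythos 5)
Your proposal is correct and follows essentially the same route as the paper's own proof: pull back the identity \eqref{eq:secondintegralfor} for $\mathcal{F}_t$ via $\varphi_t$, differentiate under the integral sign using the domination hypotheses, apply Lemma~\ref{lem:tangentjacobian} to the leafwise Jacobian, cancel the $\frac{d}{dt}(\psi_t\circ\varphi_t)$ terms via \eqref{eq:secondintegralfor}, and convert $\int_M f_0^{p-1}\dot\alpha_0\hat\psi\,d\bar\mu$ into $-\int_M f_0^p\hat\psi\,{\rm div}_{\mathcal{F}}X\,d\bar\mu$ by combining \eqref{eq:firstintegralfor} with Lemma~\ref{lem:hatdftdt}. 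No substantive differences.
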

\begin{proof}
The integral formula \eqref{eq:secondintegralfor} for $\mathcal{F}_t$ takes the form
\begin{equation*}
\int_M f_t^{p-1}\psi_t\,d\bar{\mu}=\int_M f_t^p\hat{\psi_t}^t\,d\bar{\mu},
\end{equation*}
or equivalently
\begin{equation*}
\int_M \alpha_t^{p-1}(\psi_t\circ\varphi_t)J\varphi_t\,d\bar{\mu}=\int_M \alpha_t^p(\hat{\psi_t}^t\circ\varphi_t)J\varphi_t\,d\bar{\mu}.
\end{equation*}
By the assumptions and Lebesgue dominated convergence theorem we can differentiate under the integral sign. Moreover, by Lemma \ref{lem:tangentjacobian}
\begin{equation*}
\frac{d}{dt}(\hat{\psi_t}^t\circ\varphi_t)=\frac{d}{dt}\int_L (\psi_t\circ\varphi_t)J^0\varphi_t\,d\mu_L=\left(\frac{d}{dt}(\psi_t\circ\varphi_t)+\psi {\rm div}_{\mathcal{F}}X\right)^{\widehat{\,}}.
\end{equation*}
Thus (all derivatives with respect to $t$ are taken at $t=0$)
\begin{multline*}
\int_M \left((p-1)f_0^{p-2}\frac{d\alpha_t}{dt}\psi+f_0^{p-1}\frac{d}{dt}(\psi_t\circ\varphi_t)+f_0^{p-1}\psi{\rm div}X\right)\,d\bar{\mu}\\
=\int_M \left(pf_0^{p-1}\frac{d\alpha_t}{dt}\hat{\psi}+f_0^p\left(\frac{d}{dt}(\psi_t\circ\varphi_t)+\psi {\rm div}_{\mathcal{F}}X\right)^{\widehat{\,}}+f_0^p\hat{\psi}{\rm div}X\right)\,d\bar{\mu}.
\end{multline*}
By \eqref{eq:secondintegralfor} above equality simplifies to
\begin{multline*}
(p-1)\int_M f_0^{p-2}\frac{d\alpha_t}{dt}\psi\,d\bar{\mu}\\
=p\int_M f_0^{p-1}\frac{d\alpha_t}{dt}\hat{\psi}\,d\bar{\mu}
+\int_M f_0^p\hat{\psi}{\rm div}X\,d\bar{\mu}
-\int_M f_0^{p-1}\psi{\rm div}_{\mathcal{F}^{\bot}}X\,d\bar{\mu}. 
\end{multline*}
Finally, by \eqref{eq:firstintegralfor} and Lemma \ref{lem:hatdftdt} we get the desired equality.
\end{proof}

Now we can state and prove the formula for the second variation.  

\begin{thm}\label{thm:secondvariation}
Let $p\geq 2$. Let $\mathcal{F}$ be a foliation on $M$ and $X$ compactly supported vector field admissible for the $p$--modulus of $\mathcal{F}$. Then
\begin{equation}\label{eq:secondvariation}
\begin{split}
\frac{d^2}{dt^2}{\rm mod}_p(\mathcal{F}_t)_{t=0} &=\int_M f_0^p\frac{d^2}{dt^2}\left( J\varphi_t-pJ^0\varphi_t \right)_{t=0}\,d\bar{\mu}-q\int_M f_0^p({\rm div}_{\mathcal{F}^{\bot}}X)^2\,d\bar{\mu}\\
&+p\int_M f_0^p((f_0(\sqrt{p-1}{\rm div}_{\mathcal{F}}X-\sqrt{q-1}{\rm div}_{\mathcal{F}^{\bot}}X))^{\widehat{\,}})^2\,d\bar{\mu},
\end{split}
\end{equation}
where $\frac{d^2}{dt^2}(J\varphi_t)_{t=0}$ and $\frac{d^2}{dt^2}(J^0\varphi_t)_{t=0}$ are given by \eqref{eq:tech3} and \eqref{eq:fulljacobian}, respectively.
\end{thm}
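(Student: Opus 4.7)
The plan is to compute $\frac{d^2}{dt^2}\mathrm{mod}_p(\mathcal{F}_t)_{t=0}$ by pulling back via $\varphi_t$, differentiating twice under the integral sign, and then systematically rewriting the resulting terms using Lemmas \ref{lem:tangentjacobian}, \ref{lem:hatdftdt}, \ref{lem:diffintfor} together with \eqref{eq:firstintegralfor} and \eqref{eq:secondintegralfor}. The starting identity is the change of variables
\begin{equation*}
\mathrm{mod}_p(\mathcal{F}_t)=\int_M\alpha_t^p\,J\varphi_t\,d\bar{\mu}.
\end{equation*}
The admissibility hypotheses (A1)--(A3) justify differentiating twice under the integral sign, which yields at $t=0$ four terms: $p(p-1)f_0^{p-2}(\alpha'_0)^2$, $2p f_0^{p-1}\alpha'_0\,\mathrm{div}\,X$, $p f_0^{p-1}\alpha''_0$, and $f_0^p (d^2J\varphi_t/dt^2)_{t=0}$, integrated against $d\bar{\mu}$, where $\alpha'_0,\alpha''_0$ denote the first and second $t$-derivatives of $\alpha_t$ at $t=0$.

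The first simplification is to dispose of the $\alpha''_0$ contribution. Converting $\int f_0^{p-1}\alpha''_0\,d\bar{\mu}$ to $\int f_0^p (\alpha''_0)^{\widehat{\,}}\,d\bar{\mu}$ via \eqref{eq:secondintegralfor}, substituting the second identity of Lemma \ref{lem:hatdftdt}, and then reversing \eqref{eq:secondintegralfor} applied to $\varphi=f_0(d^2J^0\varphi_t/dt^2)_{t=0}$ gives
\begin{equation*}
\int_M f_0^{p-1}\alpha''_0\,d\bar{\mu}=-\int_M f_0^p\left(\frac{d^2J^0\varphi_t}{dt^2}\right)_{t=0}d\bar{\mu}-2\int_M f_0^{p-1}\alpha'_0\,\mathrm{div}_{\mathcal{F}}X\,d\bar{\mu}.
\end{equation*}
Substituting this back and using $\mathrm{div}\,X=\mathrm{div}_{\mathcal{F}}X+\mathrm{div}_{\mathcal{F}^{\bot}}X$, the second variation reduces to $\int_M f_0^p\,\frac{d^2}{dt^2}(J\varphi_t-pJ^0\varphi_t)_{t=0}d\bar{\mu}$ plus $p(p-1)\int f_0^{p-2}(\alpha'_0)^2 d\bar{\mu}+2p\int f_0^{p-1}\alpha'_0\,\mathrm{div}_{\mathcal{F}^{\bot}}X\,d\bar{\mu}$. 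I handle the remaining pair of integrals by applying Lemma \ref{lem:diffintfor} once with $\psi=\alpha'_0$ (whose hat is $-(f_0\mathrm{div}_{\mathcal{F}}X)^{\widehat{\,}}$ by Lemma \ref{lem:hatdftdt}), and once with $\psi=f_0\mathrm{div}_{\mathcal{F}^{\bot}}X$.

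The mixed cross-integrals $\int f_0^p (f_0 A)^{\widehat{\,}} B\,d\bar{\mu}$ with $A,B\in\{\mathrm{div}_{\mathcal{F}}X,\mathrm{div}_{\mathcal{F}^{\bot}}X\}$ that arise reduce to $\int f_0^p (f_0 A)^{\widehat{\,}}(f_0 B)^{\widehat{\,}}d\bar{\mu}$: since $(f_0 A)^{\widehat{\,}}$ is leafwise constant, applying \eqref{eq:secondintegralfor} to $\varphi=f_0(f_0 A)^{\widehat{\,}} B$ gives the identity. After collecting all contributions, the coefficients of $((f_0\mathrm{div}_{\mathcal{F}}X)^{\widehat{\,}})^2$, of the cross term $(f_0\mathrm{div}_{\mathcal{F}}X)^{\widehat{\,}}(f_0\mathrm{div}_{\mathcal{F}^{\bot}}X)^{\widehat{\,}}$, and of $((f_0\mathrm{div}_{\mathcal{F}^{\bot}}X)^{\widehat{\,}})^2$ come out to $p(p-1)$, $-2p$, and $p(q-1)=q$ respectively. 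The conjugate-exponent identity $(p-1)(q-1)=1$ then packages them into the perfect square appearing in \eqref{eq:secondvariation}, while a leftover $-q\int f_0^p(\mathrm{div}_{\mathcal{F}^{\bot}}X)^2 d\bar{\mu}$ from the second application of Lemma \ref{lem:diffintfor} supplies the middle integral.

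The main obstacle is the careful bookkeeping of numerous hat-integrals and the repeated weight conversions between $f_0^{p-1}$ and $f_0^p$. The computation is tamed by a single key observation: since $(f_0 A)^{\widehat{\,}}$ is constant along the leaves of $\mathcal{F}$, under the weight $f_0^p d\bar{\mu}$ one may freely replace a raw $B$ by $(f_0 B)^{\widehat{\,}}$, and this symmetrisation is precisely what turns the three quadratic contributions into a perfect square. A secondary, more routine technicality is verifying the hypotheses of Lemma \ref{lem:diffintfor} for the specific choices $\psi=\alpha'_0$ and $\psi=f_0\mathrm{div}_{\mathcal{F}^{\bot}}X$, which follows from the admissibility condition (A3) on $\alpha,\alpha',\alpha''$ together with the compact support of $X$.
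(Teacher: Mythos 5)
Your proposal is correct and follows essentially the same route as the paper: the same change of variables $\mathrm{mod}_p(\mathcal{F}_t)=\int_M\alpha_t^pJ\varphi_t\,d\bar{\mu}$, the same elimination of the $\alpha''_0$ term via Lemma \ref{lem:hatdftdt} and \eqref{eq:secondintegralfor}, and the same two applications of Lemma \ref{lem:diffintfor} (with $\psi=\alpha'_0$ and $\psi=f_0\,\mathrm{div}_{\mathcal{F}^{\bot}}X$) leading to the coefficients $p(p-1)$, $-2p$, $q$ and the perfect square via $(p-1)(q-1)=1$. Your explicit justification of the cross-term symmetrisation $\int f_0^p(f_0A)^{\widehat{\,}}B\,d\bar{\mu}=\int f_0^p(f_0A)^{\widehat{\,}}(f_0B)^{\widehat{\,}}\,d\bar{\mu}$ is a welcome clarification of the paper's ``after some computations.''
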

\begin{proof}
By $p$--admissibility of $X$
\begin{align*}
\left|\frac{d^2}{dt^2}\left((f_t\circ\varphi_t)^pJ\varphi_t\right)\right| &\leq
\left|p(p-1)(f_t\circ\varphi_t)^{p-2}\left(\frac{d}{dt}(f_t\circ\varphi_t)\right)^2\right||J\varphi_t|\\
&+\left|p(f_t\circ\varphi_t)^{p-1}\frac{d^2}{dt^2}(f_t\circ\varphi_t)\right||J\varphi_t|\\
&+2\left|p(f_t\circ\varphi_t)^{p-1}\frac{d}{dt}(f_t\circ\varphi_t)\right|\left|\frac{d}{dt}J\varphi_t\right|
+|(f_t\circ\varphi_t)^p|\left|\frac{d^2}{dt^2}J\varphi_t\right|\\
&\leq C(h_1^{p-2}h_2^2+h_1^{p-1}h_3+h_1^{p-1}h_2+h_1^p),
\end{align*}
for some constant $C>0$ and $p$--integrable functions $h_1$ and $h_2$. It follows by H\"older inequality that the function $h_1^{p-2}h_2^2+h_1^{p-1}h_3+h_1^{p-1}h_2+h_1^p$ is integrable. Thus by Lebesgue dominated convergence theorem (we compute derivatives with respect to $t$ at $t=0$)
\begin{align*}
\frac{d^2}{dt^2}{\rm mod}_p(\mathcal{F}_t) &=\frac{d^2}{dt^2}\int_Mf_t^p\,d\bar{\mu}\\
&=\frac{d^2}{dt^2}\int_M \alpha_t^pJ\varphi_t\,d\bar{\mu}\\
&=\int_M \frac{d^2}{dt^2}\left( \alpha_t^pJ\varphi_t)\right)\,d\bar{\mu}\\
&=\int_M p(p-1)f_0^{p-2}\left(\frac{d\alpha_t}{dt}\right)^2 \,d\bar{\mu}
+\int_M p f_0^{p-1}\frac{d^2\alpha_t}{dt^2}\,d\bar{\mu}\\
&+\int_M \left(2p f_0^{p-1}\frac{d\alpha_t}{dt}{\rm div}X
+f_0^p\frac{d^2}{dt^2}J\varphi_t\right)\,d\bar{\mu}.
\end{align*}
By Lemma \ref{lem:hatdftdt} and integral formula \eqref{eq:secondintegralfor} we get
\begin{align*}
\frac{d^2}{dt^2}{\rm mod}_p(\mathcal{F}_t) &=\int_M f_0^p\frac{d^2}{dt^2}(J\varphi_t-pJ^0\varphi_t)\,d\bar{\mu}
+p(p-1)\int_M f_0^{p-2}\left(\frac{d\alpha_t}{dt}\right)^2\,d\bar{\mu}\\
&+2p\int_M f_0^{p-1}\frac{d\alpha_t}{dt}{\rm div}_{\mathcal{F}^{\bot}}X\,d\bar{\mu}
\end{align*}
Now we will use Lemma \ref{lem:diffintfor} for $\psi_t=\left(\frac{d\alpha_t}{dt}\right)_{t=0}\circ\varphi_{-t}$ and $\psi_t=f_t{\rm div}_{\mathcal{F}^{\bot}}X$. By $p$--admissibility of $\mathcal{F}$ these functions satisfy the assumptions of Lemma \ref{lem:diffintfor}. Hence, after some computations (with the use of integral formula \eqref{eq:secondintegralfor}), we get
\begin{align*}
\frac{d^2}{dt^2}{\rm mod}_p(\mathcal{F}_t) &=\int_M f_0^p\frac{d^2}{dt^2}(J\varphi_t-pJ^0\varphi_t)\,d\bar{\mu}
-q\int_M f_0^p({\rm div}_{\mathcal{F}^{\bot}}X)^2\,d\bar{\mu}\\
&+p(p-1)\int_M f_0^p\widehat{(f_0{\rm div}_{\mathcal{F}}X)}^2\,d\bar{\mu}
-2p\int_M f_0^p\widehat{(f_0{\rm div}_{\mathcal{F}}X)}\widehat{(f_0{\rm div}_{\mathcal{F}^{\bot}}X)}\,d\bar{\mu}\\
&+q\int_M f_0^p \widehat{(f_0{\rm div}_{\mathcal{F}^{\bot}}X)}^2\,d\bar{\mu}.
\end{align*}
It suffices to notice that last three components sum up to
\begin{equation*}
p\int_M f_0^p((f_0\sqrt{p-1}{\rm div}_{\mathcal{F}}X-\sqrt{q-1}{\rm div}_{\mathcal{F}^{\bot}}X)^{\widehat{\,}})^2\,d\bar{\mu}.
\end{equation*}
\end{proof}

By the monotonicity of the $p$--modulus we get that the $p$--modulus of a $k$--dimensional foliation $\mathcal{F}$ is less or equal to the $p$--modulus of the family of all $k$--dimensional submanifolds of given Riemannian manifold $M$. Moreover, if $M$ is a doubly connected set in the Euclidean space $\mathbb{R}^n$, then the $p$--modulus of all hypersurfaces separating boundary components equals the $q$--capacity of a considered set and the $p$--modulus of a foliation $\mathcal{F}_u$ given by the level sets of extremal function for the $q$--capacity \cite{wz}. Therefore, $\mathcal{F}_u$ maximizes the $p$--modulus among all foliations separating boundary components. This leads to the following definition.

 We say that a $p$--admissible foliation $\mathcal{F}$ is $p$--{\it stable} if it is a critical point of the $p$--modulus functional and if the second variation is nonpositive for any compactly supported vector field $X$ orthogonal to $\mathcal{F}$. We only require $X$ to be orthogonal to $\mathcal{F}$ since the variation by the vector field tangent to foliation $\mathcal{F}$ is constant. 

Let us now consider the codimension one case. Assume moreover that there exists unit normal vector field $N$ to $\mathcal{F}$. Then any compactly supported vector field orthogonal to $\mathcal{F}$ is of the form $X=fN$, where $f\in C^{\infty}_0(M)$. For simplicity let
\begin{align*}
A &=\int_M f_0^p\frac{d^2}{dt^2}\left( J\varphi_t-pJ^0\varphi_t \right)_{t=0}\,d\bar{\mu}-q\int_M f_0^p({\rm div}_{\mathcal{F}^{\bot}}X)^2\,d\bar{\mu}\\
B &=p\int_M f_0^p((f_0(\sqrt{p-1}{\rm div}_{\mathcal{F}}X-\sqrt{q-1}{\rm div}_{\mathcal{F}^{\bot}}X))^{\widehat{\,}})^2\,d\bar{\mu}.
\end{align*}

If $\mathcal{F}$ is a critical point of the $p$--modulus functional and the extremal function $f_0$ for the $p$--modulus of $\mathcal{F}$ is $C^1$--smooth, then rewriting \eqref{eq:f0criticalmain} we have for any vector field $Y$
\begin{equation}\label{eq:criticalpointcodim1}
Yf_0^p=f_0^pg(qH_{\mathcal{F}^{\bot}}+pH_{\mathcal{F}},Y).
\end{equation}
Notice that $H_{\mathcal{F}}=h_{\mathcal{F}}N$, where $h_{\mathcal{F}}$ has real values and $H_{\mathcal{F}^{\bot}}=\nabla_NN$. Moreover the Ricci curvature of $\mathcal{F}$ in the direction of $N$ equals the Ricci curvature of $M$ in the direction of $N$. By a simple computations we get
\begin{align*}
{\rm div}(\nabla_XX) &=f^2{\rm div}(\nabla_NN)+(\nabla_NN)f^2+{\rm div}(f(Nf)N),\\
{\rm div}_{\mathcal{F}}(\nabla_XX) &=f^2{\rm div}(\nabla_NN)+f^2|\nabla_NN|^2+(\nabla_NN)f^2-f(Nf)h_{\mathcal{F}},\\
\sum_i g(\nabla_{e_i}X,N)^2 &=|\nabla^{\top} f|^2,\\
\sum_{i,j}g(\nabla_{e_i}X,e_j)g(\nabla_{e_j}X,e_i) &=f^2|\Pi|^2,\\
{\rm tr}((\nabla X)^2) &=f|\Pi|^2+(\nabla_NN)f^2+(Nf)^2,
\end{align*}
where $\Pi$ is the second fundamental form of $\mathcal{F}$, $\Pi(X,Y)=(\nabla_XY)^{\bot}$, $X,Y\in TM$. Thus 
\begin{align*}
A &=\int_M f_0^p\Big( (1-p)f^2h_{\mathcal{F}}^2+(p-1)f^2{\rm Ric}(N)-p|\nabla^{\top}f|^2-q(Nf)^2\\
&+(p-1)f^2|\Pi|^2-pf^2|\nabla_NN|^2+(1-p)f^2{\rm div}(\nabla_NN)-p(\nabla_NN)f^2\\
&-2f(Nf)h_{\mathcal{F}}+{\rm div}(f(Nf)N)+pf(Nf)h_{\mathcal{F}}\Big)\,d\bar{\mu}.
\end{align*}
The last two components sum up to ${\rm div}(f_0^pf(Nf)N)$. Computing the divergence of the vector fields $f_0^pf^2\nabla_NN$ and $f_0^pf^2h_{\mathcal{F}}N$ and using \eqref{eq:criticalpointcodim1} for $Y=\nabla_NN$ we obtain
\begin{align*}
A &=\int_M f_0^p\Big(-p|\nabla^{\top}f|^2-q(Nf)^2+q|\nabla_NN|^2+(p-1)f^2|\Pi|^2\\
&+(p-1)f^2{\rm Ric}(N)+f^2{\rm div}(\nabla_NN)+f^2N(h_{\mathcal{F}})\Big)\,d\bar{\mu}.
\end{align*}
By the formula \cite{bko} 
\begin{equation}\label{eq:bkoformula}
{\rm div}(\nabla_NN)=-N(h_{\mathcal{F}})+|\Pi|^2+{\rm Ric}(N).
\end{equation}
we get
\begin{equation*}
A=\int_M f_0^p\left(pf^2{\rm Ric}(N)-p|\nabla^{\top} f|^2-q(Nf)^2
+pf^2|\Pi|^2+qf^2|\nabla_NN|^2 \right)\,d\bar{\mu}.
\end{equation*}
Moreover,
\begin{equation*}
B=p\int_M f_0^p((f_0(\sqrt{p-1}fh_{\mathcal{F}}+\sqrt{q-1}Nf))^{\widehat{\,}})^2\,d\bar{\mu}.
\end{equation*}

Put
\begin{equation*}
\nabla_{p,q}f=\sqrt{p}\nabla^{\top}f+\sqrt{q}\nabla^{\bot}f=\sqrt{p}\nabla^{\top}f+\sqrt{q} Nf.
\end{equation*}
Now we can state one of the main theorems.
\begin{thm}\label{thm:codim1}
Let $p\geq 2$. Assume $\mathcal{F}$ is $p$--admissible codimension one transversally orientable foliation on a Riemannian manifold $M$ and let $N$ be the unit normal vector field. Assume $\mathcal{F}$ is a critical point of the $p$--modulus functional and $f_0$ is $C^1$--smooth. Then $\mathcal{F}$ is $p$--stable if and only if 
\begin{multline}\label{eq:codim1}
\int_M f_0^p(-|\nabla_{p,q}f|^2+pf^2|\Pi|^2+qf^2|\nabla_NN|^2+pf^2{\rm Ric}(N))\,d\bar{\mu}\\
+p\int_M f_0^p((f_0(\sqrt{p-1}fh_{\mathcal{F}}+\sqrt{q-1}Nf))^{\widehat{\,}})^2\,d\bar{\mu}\leq 0.
\end{multline}
for any $f\in C^{\infty}_0(M)$.
\end{thm}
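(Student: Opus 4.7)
The plan is to apply the second variation formula \eqref{eq:secondvariation} of Theorem \ref{thm:secondvariation} in the codimension one setting and reduce it to the inequality \eqref{eq:codim1}. Since $p$--stability means precisely that $A+B\le 0$ for every admissible orthogonal test vector field, and since $A$ and $B$ have already been isolated as the two summands of the second variation, it suffices to rewrite each of them in the claimed explicit form when $X\perp\mathcal{F}$.

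First I would use transversal orientability of $\mathcal{F}$ to write every compactly supported orthogonal vector field as $X=fN$ with $f\in C^\infty_0(M)$. A direct computation based on $|N|=1$ gives ${\rm div}_{\mathcal{F}^\perp}X=Nf$ and ${\rm div}_{\mathcal{F}}X=-fh_\mathcal{F}$, so $B$ immediately becomes the second integral in \eqref{eq:codim1}, the overall sign inside the square being irrelevant after squaring.

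Then for $A$ I would substitute $X=fN$ into the Jacobian formulas \eqref{eq:tech3} and \eqref{eq:fulljacobian} and reduce every contracted tensor via the identity list displayed just above the statement. The resulting integrand contains several terms not of the desired form, notably $f^2{\rm div}(\nabla_N N)$, $(\nabla_N N)f^2$, $f(Nf)h_\mathcal{F}$ and $f^2N(h_\mathcal{F})$. To eliminate them I would integrate by parts the divergences of the compactly supported auxiliary vector fields $f_0^p f(Nf)N$, $f_0^p f^2\nabla_N N$ and $f_0^p f^2 h_\mathcal{F}N$; this transfers the derivatives onto $f_0^p$, and the resulting normal derivatives of $f_0^p$ are then converted into geometric quantities via the critical point identity \eqref{eq:criticalpointcodim1} applied with $Y=N$ and $Y=\nabla_N N$. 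The leftover $f^2N(h_\mathcal{F})$ is absorbed using \eqref{eq:bkoformula}, which trades ${\rm div}(\nabla_N N)+N(h_\mathcal{F})$ for $|\Pi|^2+{\rm Ric}(N)$. After all substitutions the only terms surviving inside $A$ are $-p|\nabla^\top f|^2-q(Nf)^2+pf^2|\Pi|^2+qf^2|\nabla_N N|^2+pf^2{\rm Ric}(N)$, and by the definition of $\nabla_{p,q}$ the first two reassemble into $-|\nabla_{p,q}f|^2$. Combining with $B$ then gives \eqref{eq:codim1}.

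The main obstacle I expect is the bookkeeping: each of the six terms in \eqref{eq:tech3} and the four terms in \eqref{eq:fulljacobian} contributes to $A$, and after the three integrations by parts above there are many partial cancellations to be tracked. In particular, the coefficients $p$ and $q=p/(p-1)$ interact nontrivially, and it is only once the critical point identity (whose right-hand side contains both $pH_\mathcal{F}$ and $qH_{\mathcal{F}^\perp}$) has been applied that the factor $q$ in front of $|\nabla_N N|^2$ emerges cleanly. Beyond \eqref{eq:bkoformula} no further analytic input is required.
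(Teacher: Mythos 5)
Your proposal is correct and follows essentially the same route as the paper: the paper's argument is precisely the computation preceding the theorem, which substitutes $X=fN$ into the second variation, integrates by parts the divergences of $f_0^pf(Nf)N$, $f_0^pf^2\nabla_NN$ and $f_0^pf^2h_{\mathcal{F}}N$, invokes the critical point identity \eqref{eq:criticalpointcodim1} and the formula \eqref{eq:bkoformula}, and reassembles $-p|\nabla^{\top}f|^2-q(Nf)^2$ into $-|\nabla_{p,q}f|^2$.
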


We will give the sufficient condition for the $p$--stability. Let $\mathcal{F}$ be a codimension one transversally orientable foliation on a Riemannian manifold $M$ and let $N$ be the unit normal vector field. Denote by $\alpha_0$ the following function
\begin{equation*}
\alpha_0=\frac{(f_0^2)^{\widehat{\,}}}{f_0},
\end{equation*}
where $f_0$ is the extremal function for the $p$--modulus of $\mathcal{F}$.
\begin{cor}\label{cor:codimonestable}
Let $p\geq 2$. Assume $\mathcal{F}$ is $p$--admissible and a critical point of the $p$--modulus functional and $f_0$ is $C^1$--smooth. If
\begin{multline}\label{eq:suffstable}
\int_M f_0^p\Big( -p|\nabla^{\top}f|^2-q(1-\alpha_0)(Nf)^2+pf^2(1-\alpha_0){\rm Ric}(N)+pf^2(1-\alpha_0)f^2|\Pi|^2\\
+qf^2|\nabla_NN|-p(N\alpha_0)f^2h_{\mathcal{F}}+p\alpha_0f^2{\rm div}(\nabla_NN)\Big)\,d\bar{\mu}\leq 0
\end{multline}
for any $f\in C^{\infty}_0(M)$, then $\mathcal{F}$ is $p$--stable.
\end{cor}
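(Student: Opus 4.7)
The strategy is to bound the term $B$ in the second variation from above by an expression involving $\alpha_0$, and then combine the resulting integrand with $A$ via a single integration by parts based on the critical point identity. Writing $v=\sqrt{p-1}\,fh_{\mathcal{F}}+\sqrt{q-1}\,Nf$, the Cauchy--Schwarz inequality applied on each leaf gives $(\widehat{f_0 v})^2\le\widehat{f_0^2}\cdot\widehat{v^2}$, and since $\widehat{f_0^2}=\alpha_0 f_0$ by the definition of $\alpha_0$, the integral formula \eqref{eq:firstintegralfor} yields
\begin{equation*}
B\le p\int_M f_0^p\,\widehat{f_0^2}\,\widehat{v^2}\,d\bar{\mu}
= p\int_M f_0^p\alpha_0 v^2\,d\bar{\mu}.
\end{equation*}
Expanding $v^2$ and using the identities $(p-1)(q-1)=1$ and $p(q-1)=q$, which both follow from $p+q=pq$, the $(Nf)^2$ contributions combine with the $-q(Nf)^2$ appearing in $A$ to produce exactly $-q(1-\alpha_0)(Nf)^2$, matching the corresponding term in \eqref{eq:suffstable}.

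The remainder of the Cauchy--Schwarz bound on $B$ is
\begin{equation*}
\int_M f_0^p\bigl(p(p-1)\alpha_0 f^2h_{\mathcal{F}}^2+2p\alpha_0 fh_{\mathcal{F}}Nf\bigr)\,d\bar{\mu}.
\end{equation*}
To bring this to the form appearing in \eqref{eq:suffstable} I would apply the divergence theorem to the compactly supported vector field $f_0^p\alpha_0 f^2h_{\mathcal{F}}N$. The critical point condition \eqref{eq:criticalpointcodim1} specialized to $Y=N$ gives $Nf_0^p=pf_0^p h_{\mathcal{F}}$ (using $\nabla_N N\perp N$), and together with ${\rm div}(N)=-h_{\mathcal{F}}$ a direct expansion rearranges the above integral into
\begin{equation*}
-p\int_M f_0^p f^2 N(\alpha_0 h_{\mathcal{F}})\,d\bar{\mu}
=-p\int_M f_0^p f^2\bigl(\alpha_0 Nh_{\mathcal{F}}+h_{\mathcal{F}}N\alpha_0\bigr)\,d\bar{\mu}.
\end{equation*}

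Finally, applying \eqref{eq:bkoformula} in the form $Nh_{\mathcal{F}}=-{\rm div}(\nabla_N N)+|\Pi|^2+{\rm Ric}(N)$ converts the previous display into the four remaining terms of \eqref{eq:suffstable}, namely $-p\alpha_0 f^2{\rm Ric}(N)$, $-p\alpha_0 f^2|\Pi|^2$, $+p\alpha_0 f^2{\rm div}(\nabla_N N)$, and $-p(N\alpha_0)f^2h_{\mathcal{F}}$. Together with the unchanged $-p|\nabla^{\top}f|^2+qf^2|\nabla_N N|^2$ from $A$ and the pieces of $pf^2{\rm Ric}(N)$ and $pf^2|\Pi|^2$ left over after cancellation, this produces the integrand of \eqref{eq:suffstable}. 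Hence under the hypothesis of the corollary $A+B\le 0$, i.e.\ $\mathcal{F}$ is $p$-stable. The main obstacle is purely bookkeeping: verifying that the coefficients produced by the on-leaf Cauchy--Schwarz, the algebraic identities $(p-1)(q-1)=1$ and $p(q-1)=q$, and the single divergence computation together reproduce exactly the integrand of \eqref{eq:suffstable}; no estimate beyond the leafwise Cauchy--Schwarz is needed, and the standing assumption $p\ge 2$ is inherited from Theorem~\ref{thm:secondvariation}.
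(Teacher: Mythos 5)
Your proposal is correct and follows essentially the same route as the paper: the leafwise Cauchy--Schwarz bound $(\widehat{f_0\psi})^2\le\widehat{f_0^2}\,\widehat{\psi^2}$ combined with the integral formula \eqref{eq:firstintegralfor} to replace $B$ by $p\int_M f_0^p\alpha_0\psi^2\,d\bar{\mu}$, followed by integrating the divergence of $f_0^p\alpha_0 f^2h_{\mathcal{F}}N$ and invoking \eqref{eq:bkoformula}. Your version just spells out the bookkeeping (including the identities $(p-1)(q-1)=1$ and $p(q-1)=q$) that the paper leaves implicit, and in fact confirms that the factor $f^2$ duplicated in the $|\Pi|^2$ term of \eqref{eq:suffstable} and the missing square on $|\nabla_NN|$ are typographical.
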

\begin{proof}
Denote the function $\sqrt{p-1}fh_{\mathcal{F}}+\sqrt{q-1}Nf$ by $\psi$. Then, by H\"older inequality and integral formula \eqref{eq:firstintegralfor}, we have
\begin{equation*}
\int_M f_0^p\widehat{f_0\psi}^2\,d\bar{\mu}\leq \int_M f_0^p \widehat{f_0^2}\widehat{\psi^2}\,d\bar{\mu}
=\int_M f_0^{p-1}\widehat{f_0^2}\psi^2\,d\bar{\mu}=\int_M f_0^p\alpha_0\psi^2\,d\bar{\mu}.
\end{equation*}
It suffices to use Theorem \ref{thm:codim1}, equality \eqref{eq:bkoformula} and compute the divergence of the vector field $f_0^p\alpha_0f^2h_{\mathcal{F}}N$.
\end{proof}

\begin{cor}
Let $\mathcal{F}$ be a foliation given by the level sets of the distance function (from the codimension one submanifold $L$ in $M$). Assume $f_0$ is $C^1$--smooth. Then $\mathcal{F}$ is $p$--stable for any $p\geq 2$.
\end{cor}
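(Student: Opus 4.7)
The plan is to reduce everything to Corollary~\ref{cor:codimonestable}, exploiting two special features of a distance-function foliation: $|\nabla u|=1$, and the resulting fact that the extremal function is constant along leaves. Write $u(x)=d(x,L)$ on the open set where it is smooth, so that $N=\nabla u$ is a unit vector field normal to $\mathcal{F}$ whose integral curves are unit-speed geodesics emanating orthogonally from $L$. Consequently $\nabla_N N=0$, and therefore $|\nabla_N N|^2=0$ and ${\rm div}(\nabla_N N)=0$.

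Next I extract the extremal function and the auxiliary quantity $\alpha_0$. Applying \eqref{eq:extfunsubmersion} with $\Phi=u$ and $J\Phi=|\nabla u|=1$ gives $f_0(x)=1/\mu_{L_x}(L_x)$, so $f_0$ is constant along each leaf. Using $\int_{L_x}f_0\,d\mu_{L_x}=1$ we then have $\widehat{f_0^2}(x)=f_0(x)^2\mu_{L_x}(L_x)=f_0(x)$, whence
\begin{equation*}
\alpha_0=\frac{\widehat{f_0^2}}{f_0}\equiv 1,\qquad N\alpha_0=0.
\end{equation*}
The earlier result in the paper asserts that the distance-function foliation is a critical point of the $p$-modulus functional, so all hypotheses of Corollary~\ref{cor:codimonestable} are in force.

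Substituting $\alpha_0\equiv 1$, $N\alpha_0=0$, $|\nabla_N N|^2=0$, and ${\rm div}(\nabla_N N)=0$ into the left-hand side of \eqref{eq:suffstable}, every term except $-p|\nabla^{\top}f|^2$ is annihilated, and the sufficient condition collapses to
\begin{equation*}
-p\int_M f_0^p|\nabla^{\top}f|^2\,d\bar{\mu}\leq 0,
\end{equation*}
which is trivially true for every $f\in C_0^{\infty}(M)$. Corollary~\ref{cor:codimonestable} then yields $p$-stability. The only non-bookkeeping observation is the identity $\alpha_0\equiv 1$, which follows immediately from $f_0$ being constant along leaves; this is where the distance-function hypothesis does essentially all the work, and is what prevents the potentially troublesome curvature and second-fundamental-form terms in \eqref{eq:suffstable} from appearing at all.
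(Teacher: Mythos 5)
Your proof is correct and follows essentially the same route as the paper: both establish that $f_0$ is constant on the leaves (hence $\alpha_0\equiv 1$) and that $\mathcal{F}^{\bot}$ is totally geodesic, so that the sufficient condition \eqref{eq:suffstable} of Corollary~\ref{cor:codimonestable} collapses to $-p\int_M f_0^p|\nabla^{\top}f|^2\,d\bar{\mu}\leq 0$. The only cosmetic difference is that you obtain the leafwise constancy of $f_0$ directly from \eqref{eq:extfunsubmersion} with $J\Phi=|\nabla u|=1$, while the paper deduces it from the criticality condition together with $H_{\mathcal{F}^{\bot}}=0$; both are valid.
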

\begin{proof}
$\mathcal{F}$ is $p$--stable for any $p>1$ (see the next section). Thus $f_0$ is constant on the leaves of $\mathcal{F}$, hence $f_0=\frac{1}{\sigma}$, where $\sigma(x)$, $x\in M$, is the measure of the leaf through $x$. Thus
\begin{equation*}
\alpha_0=\frac{1}{f_0}\widehat{f_0^2}=\sigma \frac{1}{\sigma}=1.
\end{equation*}
Since $\mathcal{F}^{\bot}$ is totally geodesic, then condition \eqref{eq:suffstable} takes the form
\begin{equation*}
-p\int_M f_0^p|\nabla^{\top}f|^2\leq 0,
\end{equation*}
which clearly holds for any $f\in C^{\infty}_0(M)$.
\end{proof}

At the end of this section we will describe the second variation in terms of extremal function $f_0$.
\begin{prop}\label{prop:sigma2f0}
Assume $\mathcal{F}$ is $p$--admissible codimension one transversally orientable and $N$ is the unit normal vector field. Assume moreover that the extremal function $f_0$ for the $p$--modulus of $\mathcal{F}$ is $C^2$--smooth. If $\mathcal{F}$ is a critical point of the $p$--modulus functional, then the following formula holds
\begin{equation}\label{eq:sigma2f0}
p|\Pi|^2+q|\nabla_NN|^2+p{\rm Ric}(N)=(p-1)\Delta_{\mathcal{F}} k_0+(2-p)|\nabla^{\top}k_0|+N^2k_0,
\end{equation}
where $k_0=\log f_0^p$. In particular, $\mathcal{F}$ is $p$--stable if and only if
\begin{multline}\label{eq:codim1f_0}
\int_M e^{k_0}\left(-|\nabla_{p,q}f|^2+f^2\left( (p-1)\Delta_{\mathcal{F}} k_0+(2-p)|\nabla^{\top}k_0|+N^2k_0 \right)\right)\,d\bar{\mu}\\
+p\int_Me^{k_0}\left(\left( e^{\frac{1}{p}k_0}\left(\frac{\sqrt{p-1}}{p}(Nk_0)f+\sqrt{q-1}Nf \right)\right)^{\widehat{\quad}}\right)^2\,d\bar{\mu}\leq 0
\end{multline}
for any $f\in C^{\infty}_0(M)$.
\end{prop}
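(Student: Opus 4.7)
The plan is to derive the pointwise identity \eqref{eq:sigma2f0} first, and then substitute it into the stability criterion \eqref{eq:codim1} of Theorem \ref{thm:codim1} to obtain \eqref{eq:codim1f_0}. The key observation is that the vector-valued first-variation integrand displayed just before \eqref{eq:f0criticalmain} decomposes into a tangential and a normal piece that must be analysed separately. The normal piece gives the critical-point condition \eqref{eq:criticalpoint}, which in codimension one reads $Nk_0=p\,h_{\mathcal{F}}$. The tangential piece in fact vanishes for any $p$-admissible $\mathcal{F}$ with $C^1$ extremal function, regardless of criticality, because a compactly supported vector field tangent to $\mathcal{F}$ has a flow preserving $\mathcal{F}$ and hence produces zero first variation; this yields the pointwise identity $\nabla^{\top}k_0=q\,\nabla_N N$.

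From these two identities I read off
\begin{equation*}
|\nabla_N N|^2=q^{-2}|\nabla^{\top}k_0|^2,\qquad \mathrm{div}_{\mathcal{F}}(\nabla_N N)=q^{-1}\Delta_{\mathcal{F}}k_0,\qquad Nh_{\mathcal{F}}=p^{-1}N^2k_0.
\end{equation*}
Next I decompose the full divergence of $\nabla_N N$ into leafwise and transverse pieces: since $\nabla_N N$ is tangent to $\mathcal{F}$ (because $g(\nabla_N N,N)=0$), a short calculation gives $\mathrm{div}(\nabla_N N)=\mathrm{div}_{\mathcal{F}}(\nabla_N N)-|\nabla_N N|^2$. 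Plugging this into \eqref{eq:bkoformula} expresses $|\Pi|^2+\mathrm{Ric}(N)$ in terms of $\mathrm{div}_{\mathcal{F}}(\nabla_N N)$, $|\nabla_N N|^2$, and $Nh_{\mathcal{F}}$; multiplying by $p$, adding $q|\nabla_N N|^2$, and rewriting everything in terms of $k_0$ via the three identities above (with the elementary $p/q=p-1$ carrying the coefficient bookkeeping) produces \eqref{eq:sigma2f0}.

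For the stability reformulation I substitute the right-hand side of \eqref{eq:sigma2f0} for $p|\Pi|^2+q|\nabla_N N|^2+p\,\mathrm{Ric}(N)$ in \eqref{eq:codim1}. In parallel, the inner combination $\sqrt{p-1}\,fh_{\mathcal{F}}+\sqrt{q-1}\,Nf$ appearing in the second integral of \eqref{eq:codim1} becomes $\tfrac{\sqrt{p-1}}{p}(Nk_0)f+\sqrt{q-1}\,Nf$ after the substitution $h_{\mathcal{F}}=p^{-1}Nk_0$, while the weights $f_0^p$ and $f_0$ become $e^{k_0}$ and $e^{k_0/p}$. Reading off the result gives exactly \eqref{eq:codim1f_0}.

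The main obstacle is justifying the tangential identity $\nabla^{\top}k_0=q\,\nabla_N N$ as a pointwise, rather than merely integrated, statement. One must argue that the vector-valued integrand of the first variation is zero in the tangential direction, which requires exploiting that tangential flows preserve $\mathcal{F}$ together with a localization/density argument using compactly supported tangent vector fields. Once this identity is secured, the remainder of the argument is careful but mechanical algebra.
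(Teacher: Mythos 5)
Your argument follows the paper's route exactly: the paper's own proof is the single line ``Follows directly by \eqref{eq:f0criticalmain} and \eqref{eq:bkoformula}'', and what you write is precisely the expansion of that line --- splitting \eqref{eq:f0criticalmain} into the normal identity $Nk_0=ph_{\mathcal{F}}$ and the tangential identity $\nabla^{\top}k_0=q\nabla_NN$ (which the paper likewise justifies only by the invariance of $\mathcal{F}$ under tangential flows), combining with \eqref{eq:bkoformula} and ${\rm div}(\nabla_NN)={\rm div}_{\mathcal{F}}(\nabla_NN)-|\nabla_NN|^2$, and then substituting into \eqref{eq:codim1}. One caveat worth recording: if you actually carry out the coefficient bookkeeping you describe, the middle term comes out as $(q-p)|\nabla_NN|^2=\tfrac{(p-1)(2-p)}{p}\,|\nabla^{\top}k_0|^2$ rather than $(2-p)|\nabla^{\top}k_0|$ as printed in \eqref{eq:sigma2f0}, so that term of the stated formula appears to contain a typo (in both the exponent and the coefficient); your derivation is otherwise sound.
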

\begin{proof}
Follows directly by \eqref{eq:f0criticalmain} and \eqref{eq:bkoformula}.
\end{proof}

\section{Level sets of $q$--harmonic functions}

In this section we study the $p$--modulus of foliations given by the level sets of functions with non--vanishing gradient. We show the relation between $q$--harmonicity of functions defining foliations and $p$--modulus of these foliations.

First, recall that a $C^2$--smooth function $u:M\to \mathbb{R}$ is $q$--harmonic, if it satisfies the following differential equation
\begin{equation*}
\Delta_q u= {\rm div}(|\nabla u|^{q-2}\nabla u)=0.
\end{equation*}

Let $u:M\to \mathbb{R}$ be $C^2$--smooth and assume $|\nabla u|>0$. Denote by $\mathcal{F}_u$ the foliation given by the level sets of $u$. Let $\nu^q_u:\mathbb{R}\to\mathbb{R}$ be defined as follows
\begin{equation*}
\nu^q_u(t)=\int_{u^{-1}(t)}|\nabla u|^{q-1}\,d\mu_{u^{-1}(t)}=\left(\widehat{|\nabla u|^{q-1}}\right)\circ u^{-1}
\end{equation*}
Notice that by \eqref{eq:modsubmersion} we have
\begin{equation*}
{\rm mod}_p(\mathcal{F}_u)=\int_{u(M)} \nu^q_u(t)^{1-p}\,dt.
\end{equation*}
Consider the vector field $Y=\frac{\nabla u}{|\nabla u|^2}$. Then
\begin{equation*}
u_{\ast} Y=g(\nabla u,Y)\left(\frac{d}{dt}\right)\circ u=\frac{d}{dt}\circ u.
\end{equation*}
Fix $s\in u(M)$. If $\psi_t$ is a local flow of $Y$ in the neighborhood of $u^{-1}(s)$ then $u\circ\psi_t=\lambda_t\circ u$, where $\lambda_t(\theta)=\theta+t$. Hence $\psi_t$ maps $u^{-1}(s)$ to $u^{-1}(s+t)$. 

\begin{prop}\label{prop:diffnu}
Assume $u:M\to\mathbb{R}$ is $C^2$--smooth. If the $1$--parameter family of functions $\left((\frac{\Delta_qu}{|\nabla u|}\circ\psi_t\right)J^0\psi_t$, $t\in(-\varepsilon,\varepsilon)$, is dominated by a function (independent of $t$) integrable on the level sets $u^{-1}(s+t)$, $t\in(\varepsilon,\varepsilon)$, then $\nu^q_u$ is differentiable at $s$ and
\begin{equation}\label{eq:diffnu}
\left(\frac{d}{dt}\nu^q_u\right)(s)=\widehat{\left(\frac{\Delta_q u}{|\nabla u|}\right)}\circ u^{-1}(s).
\end{equation} 
\end{prop}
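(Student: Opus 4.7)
My strategy is to pull the integral over $u^{-1}(s+t)$ back to the fixed level set $u^{-1}(s)$ via the flow $\psi_t$ of $Y=\nabla u/|\nabla u|^2$, differentiate under the integral sign, and identify the resulting integrand with $\Delta_q u/|\nabla u|$. Since $\psi_t$ maps $u^{-1}(s)$ diffeomorphically onto $u^{-1}(s+t)$, the change of variables formula gives
\begin{equation*}
\nu^q_u(s+t)=\int_{u^{-1}(s)}\bigl(|\nabla u|^{q-1}\circ\psi_t\bigr)J^0\psi_t\,d\mu_{u^{-1}(s)},
\end{equation*}
where $J^0\psi_t$ is the tangential Jacobian appearing in Lemma \ref{lem:tangentjacobian}.

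Next I would differentiate the integrand pointwise in $t$. Because $\psi_t$ sends each leaf of $\mathcal{F}_u$ to another leaf of the same foliation, the pushed-forward foliation coincides with $\mathcal{F}_u$ itself, so formula \eqref{eq:tech2} specializes to
\begin{equation*}
\frac{d}{dt}\bigl[(|\nabla u|^{q-1}\circ\psi_t)J^0\psi_t\bigr]=\Bigl(Y(|\nabla u|^{q-1})+|\nabla u|^{q-1}{\rm div}_{\mathcal{F}_u}(Y)\Bigr)\circ\psi_t\cdot J^0\psi_t.
\end{equation*}
The domination hypothesis is precisely tailored to the right-hand side, so Lebesgue's dominated convergence theorem justifies interchanging the derivative with the integral.

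It then remains to identify the combination in parentheses, evaluated at $t=0$, with $\Delta_q u/|\nabla u|$. Writing $Y=N/|\nabla u|$ with $N=\nabla u/|\nabla u|$ the unit normal to $\mathcal{F}_u$, a short computation gives
\begin{equation*}
Y(|\nabla u|^{q-1})+|\nabla u|^{q-1}{\rm div}_{\mathcal{F}_u}(Y)=\frac{1}{|\nabla u|}\bigl(N(|\nabla u|^{q-1})+|\nabla u|^{q-1}{\rm div}_{\mathcal{F}_u}(N)\bigr).
\end{equation*}
The Leibniz rule ${\rm div}(fN)=N(f)+f\,{\rm div}(N)$ then packages the numerator as ${\rm div}(|\nabla u|^{q-2}\nabla u)=\Delta_q u$, provided one knows that ${\rm div}_{\mathcal{F}_u}(N)$ coincides with the full ambient divergence ${\rm div}(N)$. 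This last identity, which I regard as the only genuinely delicate point, follows from $g(\nabla_N N,N)=\tfrac{1}{2}N|N|^2=0$: the normal component of $\nabla N$ along $N$ vanishes for any unit vector field, so the missing summand in passing from ${\rm div}_{\mathcal{F}_u}$ to ${\rm div}$ is zero. Putting everything together and passing from $s$ to $u^{-1}(s)$ recovers \eqref{eq:diffnu}.
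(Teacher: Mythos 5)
Your proposal is correct and follows essentially the same route as the paper: pull the integral back to the fixed level set $u^{-1}(s)$ via $\psi_t$, differentiate the integrand using the tangential Jacobian formula of Lemma \ref{lem:tangentjacobian} (noting that $\psi_t$ preserves $\mathcal{F}_u$), identify the resulting expression with $\bigl(\Delta_q u/|\nabla u|\bigr)\circ\psi_t\cdot J^0\psi_t$, and invoke dominated convergence. Your algebraic identification via $Y=N/|\nabla u|$ and ${\rm div}_{\mathcal{F}_u}N={\rm div}\,N$ (from $g(\nabla_NN,N)=0$) is just a cosmetic reorganization of the paper's computation, which cancels the same normal term $g(\nabla_NY,N)$ to reach $\tfrac{1}{|\nabla u|}{\rm div}(|\nabla u|^qY)=\Delta_qu/|\nabla u|$.
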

\begin{proof}
Since
\begin{equation*}
\nu^q_u(s+t)=\int_{u^{-1}(s)}(|\nabla u|^{q-1}\circ\psi_t)J^0\psi_t\,d\mu_{u^{-1}(s)}
\end{equation*}
we have by Lagrange mean value theorem
\begin{equation*}
\lim_{t\to 0}\frac{1}{t}\left(\nu^q_u(s+t)-\nu^q_u(s)\right)=\lim_{t\to 0}\int_{u^{-1}(s)}\frac{d}{dt}\left( |\nabla u|^{q-1}\circ\psi_t\cdot J^0\psi_t \right)_{\theta_t},
\end{equation*}
where $\theta_t\in (-t,t)$. By easy computations we get
\begin{align*}
\frac{d}{dt}\left( (|\nabla u|^{q-1}\circ\psi_t) J^0\psi_t \right) &=\left(Y(|\nabla u|^{q-1})+|\nabla u|^{q-1}{\rm div}_{\mathcal{F}_u}Y\right)\circ\psi_t\cdot J^0\psi_t\\
&=\left(\frac{1}{|\nabla u|}Y|\nabla u|^q+|\nabla u|^qY\left(\frac{1}{|\nabla u|}\right)\right.\\
&\left.+|\nabla u|^{q-1}{\rm div}Y-|\nabla u|^qY\left(\frac{1}{|\nabla u|}\right)\right)\circ\psi_t\cdot J^0\psi_t\\
&=\left(\frac{1}{|\nabla u|}{\rm div}(|\nabla u|^qY)\right)\circ\psi_t\cdot J^0\psi_t\\
&=\left(\frac{\Delta_q u}{|\nabla u|}\right)\circ\psi_t\cdot J^0\psi_t.
\end{align*}
Thus by assumptions and by Lebesgue dominated convergence theorem function $\nu^q_u$ is differentiable at $s$ and the desired equality holds.
\end{proof}

By above proposition we have an important corollary.
\begin{cor}
If $u$ is $q$--harmonic, then $\nu^q_u$ is constant.
\end{cor}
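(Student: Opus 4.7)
The plan is to apply Proposition \ref{prop:diffnu} directly: once the differentiation formula \eqref{eq:diffnu} is in hand, the corollary becomes almost tautological. First I would observe that $q$-harmonicity of $u$ is defined precisely by $\Delta_q u = 0$, so the integrand $\Delta_q u / |\nabla u|$ appearing on the right-hand side of \eqref{eq:diffnu} vanishes identically on $M$.

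Next I would verify that the hypotheses of Proposition \ref{prop:diffnu} are satisfied in this situation. The $1$-parameter family
\[
\left(\frac{\Delta_q u}{|\nabla u|}\circ\psi_t\right)J^0\psi_t
\]
is identically zero, hence trivially dominated (for example, by the zero function) and trivially integrable on every level set $u^{-1}(s+t)$. Therefore Proposition \ref{prop:diffnu} applies at every $s\in u(M)$ and yields
\[
\left(\frac{d}{dt}\nu^q_u\right)(s)=\widehat{\left(\frac{\Delta_q u}{|\nabla u|}\right)}\circ u^{-1}(s)=0.
\]

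Finally, since $M$ is a (connected) manifold and $u$ is continuous, the image $u(M)$ is a connected subset of $\mathbb{R}$, that is, an interval. A function on an interval with vanishing derivative is constant, which gives the claim. There is essentially no obstacle here; the only thing one might worry about is the connectedness of $u(M)$ if one wanted to read the statement globally across possibly disconnected components, but on each connected component of $M$ the image is an interval and the argument goes through verbatim.
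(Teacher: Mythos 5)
Your proposal is correct and follows exactly the paper's route: the corollary is stated as an immediate consequence of Proposition \ref{prop:diffnu}, since $\Delta_q u=0$ makes the right-hand side of \eqref{eq:diffnu} vanish. Your additional remarks checking that the domination hypothesis holds trivially and that $u(M)$ is an interval are sensible bits of diligence but do not change the argument.
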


We now derive the differential equation for $\mathcal{F}_u$ to be the critical point of the $p$--modulus functional. Let 
\begin{equation*}
\lambda(t)=\int_a^t \nu^q_u(t)^{1-p},\quad t\in(a,b),
\end{equation*}
where $(a,b)=u(M)$. Then $\lambda '(t)=\nu^q_u(t)^{1-p}$, hence
\begin{equation*}
(|\nabla(\lambda\circ u)|^{q-1})^{\widehat{\,}}=|\lambda '\circ u|^{q-1}\widehat{|\nabla u|^{q-1}}=1.
\end{equation*}
Taking $v=\lambda \circ u$, the extremal function $f_0$ for the $p$--modulus of $\mathcal{F}_u=\mathcal{F}_v$ equals, by \eqref{eq:extfunsubmersion},
\begin{equation*}
f_0=|\nabla v|^{q-1}.
\end{equation*}
Since 
\begin{equation*}
H_{\mathcal{F}_u}=-{\rm div}_{\mathcal{F}_v}\left(\frac{\nabla v}{|\nabla v|}\right)\frac{\nabla v}{|\nabla v|}
=-{\rm div}\left(\frac{\nabla v}{|\nabla v|}\right)\frac{\nabla v}{|\nabla v|},
\end{equation*}
the equation \eqref{eq:criticalpoint} takes the form
\begin{equation*}
\nabla^{\bot}(\log |\nabla v|^{q-1})=-\,{\rm div}\left(\frac{\nabla v}{|\nabla v|}\right)\frac{\nabla v}{|\nabla v|}.
\end{equation*}
Computing the scalar product of both sides with $\frac{\nabla v}{|\nabla v|}$ we get
\begin{equation*}
\frac{\nabla v}{|\nabla v|}(|\nabla v|^{q-1})=-|\nabla v|^{q-1}{\rm div}\left(\frac{\nabla v}{|\nabla v|}\right).
\end{equation*}
Hence, by the formula ${\rm div}(fX)=f{\rm div} X+Xf$, we get
\begin{equation*}
{\rm div}(|\nabla v|^{q-2}\nabla v)=0,
\end{equation*}
thus $v$ is $q$--harmonic.

Returning to the function $u$ we have 
\begin{align*}
0 &=\Delta_q(\lambda\circ u)={\rm div}(|\nabla(\lambda\circ u)|^{q-2}\nabla(\lambda\circ u))\\
&={\rm div}((\lambda '\circ u)^{q-1}|\nabla u|^{q-2}\nabla u)\\
&=(\lambda '\circ u)^{q-1}\Delta_q u+|\nabla u|^{q-2}\nabla u((\lambda '\circ u)^{q-1})\\
&=(\lambda '\circ u)^{q-2}\left( (\lambda '\circ u)\Delta_q u+(q-1)(\lambda ''\circ u)|\nabla u|^q \right),
\end{align*}
assuming $\lambda$ is twice differentiable. Notice that by Proposition \ref{prop:diffnu}
\begin{equation*}
\lambda ''\circ u=(1-p)(\nu^q_u\circ u)^{-p}\cdot((\nu^q_u)'\circ u)
=(1-p)(\nu^q_u\circ u)^{-p}\widehat{\left(\frac{\Delta_q u}{|\nabla u|}\right)}.
\end{equation*} 
Since
\begin{equation*}
\nu^q_u\circ u=\widehat{|\nabla u|^{q-1}}\quad\textrm{and}\quad \lambda '\circ u=\widehat{|\nabla u|^{q-1}}^{1-p},
\end{equation*}
we obtain
\begin{equation*}
\Delta_q u=(1-q)\frac{\lambda ''\circ u}{\lambda '\circ u}|\nabla u|^q
=\frac{|\nabla u|^q}{\widehat{|\nabla u|^{q-1}}}\widehat{\left(\frac{\Delta_q u}{|\nabla u|}\right)}
=f_0|\nabla u|\widehat{\left(\frac{\Delta_q u}{|\nabla u|}\right)}.
\end{equation*}

We collect obtained results in the following theorem.
\begin{thm}\label{thm:criticalpointfu}
The following two characterizations of critical points of $p$--modulus functional for foliations by level sets of functions hold.
\begin{enumerate}
\item Assume $\nu^q_u$ is differentiable and that \eqref{eq:diffnu} holds. Then, $\mathcal{F}_u$ is a critical point of $p$--modulus functional if and only if $u$ satisfies the differential equation
\begin{equation*}
\frac{\Delta_q u}{|\nabla u|}=f_0\widehat{\left(\frac{\Delta_q u}{|\nabla u|}\right)},
\end{equation*}
where $f_0$ is the extremal function for the $p$--modulus of $\mathcal{F}_u$. 
\item $\mathcal{F}_u$ is a critical point of $p$--modulus functional if and only if there exist $q$--harmonic function $v$ such that $\mathcal{F}_v=\mathcal{F}_u$.
\end{enumerate}
\end{thm}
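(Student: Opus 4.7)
The plan is to package the computation carried out in the paragraph preceding the theorem into a two‑step proof, reading the argument forward for part (2) and then tracing it back through the reparametrization for part (1).

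First I would reparametrize. Define
\begin{equation*}
\lambda(t)=\int_a^t \nu^q_u(s)^{1-p}\,ds,\qquad v=\lambda\circ u,
\end{equation*}
so that $\lambda'>0$, $\mathcal{F}_v=\mathcal{F}_u$, and $\widehat{|\nabla v|^{q-1}}=|\lambda'\circ u|^{q-1}\widehat{|\nabla u|^{q-1}}=1$. By the submersion formula \eqref{eq:extfunsubmersion} this gives the clean form $f_0=|\nabla v|^{q-1}$ for the extremal function.

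Next I would prove the forward direction of (2): if $\mathcal{F}_u$ is a critical point then $v$ is $q$‑harmonic. Since $\mathcal{F}_v$ has codimension one and the unit normal $N=\nabla v/|\nabla v|$ is globally defined with $H_{\mathcal{F}_v}=-{\rm div}(N)N$, equation \eqref{eq:criticalpoint} reduces, after taking the scalar product with $N$, to
\begin{equation*}
N(|\nabla v|^{q-1})=-|\nabla v|^{q-1}{\rm div}\!\left(\frac{\nabla v}{|\nabla v|}\right),
\end{equation*}
which by ${\rm div}(fX)=f\,{\rm div}\,X+Xf$ is precisely ${\rm div}(|\nabla v|^{q-2}\nabla v)=0$. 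Running the same chain of equalities backwards establishes the converse, so (2) is complete.

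For part (1) I would transfer $q$‑harmonicity of $v$ back to $u$. Expanding $0=\Delta_q(\lambda\circ u)$ by the chain rule gives
\begin{equation*}
(\lambda'\circ u)\Delta_q u+(q-1)(\lambda''\circ u)|\nabla u|^q=0.
\end{equation*}
Now Proposition \ref{prop:diffnu} rewrites $\lambda''\circ u=(1-p)(\nu^q_u\circ u)^{-p}\widehat{(\Delta_qu/|\nabla u|)}$, while $\nu^q_u\circ u=\widehat{|\nabla u|^{q-1}}$ and $\lambda'\circ u=\widehat{|\nabla u|^{q-1}}^{\,1-p}$. Substituting and simplifying (with $f_0=|\nabla v|^{q-1}=|\nabla u|^{q-1}/\widehat{|\nabla u|^{q-1}}$) yields
\begin{equation*}
\frac{\Delta_q u}{|\nabla u|}=f_0\,\widehat{\left(\frac{\Delta_q u}{|\nabla u|}\right)},
\end{equation*}
and the reverse implication is obtained by reading the same string of equalities in the opposite direction, using part (2) applied to $v$.

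The only delicate point, and the one I would check carefully, is the legitimacy of the reparametrization: one needs $\lambda$ to be well defined and $C^2$ on $u(M)$, which requires $\nu^q_u$ to be continuous and differentiable (guaranteed by the hypothesis placed on \eqref{eq:diffnu} in part (1)), and one needs the differentiation of the integral $\lambda(t)=\int_a^t\nu^q_u(s)^{1-p}ds$ to be justified; beyond this the argument is just the chain rule plus the two characterizations \eqref{eq:criticalpoint} and \eqref{eq:extfunsubmersion} combined with Proposition \ref{prop:diffnu}.
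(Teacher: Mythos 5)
Your proposal is correct and follows essentially the same route as the paper: the same reparametrization $v=\lambda\circ u$ with $\lambda'=(\nu^q_u)^{1-p}$ giving $f_0=|\nabla v|^{q-1}$, the same reduction of \eqref{eq:criticalpoint} to $q$--harmonicity of $v$ for part (2), and the same chain--rule computation combined with Proposition \ref{prop:diffnu} to obtain the equation for $u$ in part (1). Your closing remark on the required regularity of $\lambda$ is a point the paper itself only flags in passing (``assuming $\lambda$ is twice differentiable''), so it is well taken but does not change the argument.
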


\begin{cor}\label{cor:criticaldist}
Assume $\mathcal{F}_u$ is given by the distance function $u={\rm dist}(\cdot,L)$ from the codimension one submanifold $L$. Then $\mathcal{F}_u$ is a critical point of the $p$--modulus functional for any $p>1$.
\end{cor}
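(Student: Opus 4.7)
My plan is to apply Theorem~\ref{thm:criticalpointfu}(2): $\mathcal{F}_u$ is a critical point of the $p$-modulus functional if and only if there exists a $q$-harmonic function $v$ with $\mathcal{F}_v=\mathcal{F}_u$. It suffices to produce such a $v$, which I would construct explicitly as $v=\lambda\circ u$ for an appropriate scalar function $\lambda$, automatically ensuring $\mathcal{F}_v=\mathcal{F}_u$ on the regular set.

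Since $u$ is a distance function, $|\nabla u|=1$ on the complement of $L$ and its cut locus, hence $\nu^q_u(t)=\sigma(t):=\mathrm{vol}_{n-1}(u^{-1}(t))$. Following the construction used in the derivation preceding Theorem~\ref{thm:criticalpointfu}, I would take $\lambda(t)=\int_{t_0}^{t}\sigma(s)^{1-p}\,ds$ and $v=\lambda\circ u$. To verify that $v$ is $q$-harmonic, I would first compute
\[|\nabla v|^{q-2}\nabla v=(\lambda'(u))^{q-1}\nabla u=\sigma(u)^{(1-p)(q-1)}\nabla u=\sigma(u)^{-1}\nabla u,\]
where the final equality uses the conjugacy $p+q=pq$, which gives $(1-p)(q-1)=-1$. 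Expanding the divergence with $|\nabla u|=1$ then yields $\Delta_qv=\sigma(u)^{-1}\Delta u-\sigma(u)^{-2}\sigma'(u)$, so the $q$-harmonicity of $v$ reduces to the pointwise identity $\sigma(u)\,\Delta u=\sigma'(u)$. By Proposition~\ref{prop:diffnu} one has $\sigma'(t)=\widehat{\Delta u}(t)$, so this reads $\Delta u=\widehat{\Delta u}/\sigma$, i.e.\ $\Delta u$ coincides with its leafwise average at every point.

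The main obstacle is precisely this last identity. Since for a distance function $\Delta u=-h_{\mathcal{F}}$, where $h_{\mathcal{F}}$ is the scalar mean curvature of the leaf, the claim amounts to saying that $h_{\mathcal{F}}$ is constant along each parallel hypersurface of $L$. I would try to establish this via the Riccati-type ODE satisfied by the shape operator of the parallel hypersurfaces along the geodesics normal to $L$, together with the total geodesicity of $\mathcal{F}^{\bot}$ (which for the distance function is automatic from $\nabla_{N}N=0$). Once this geometric identity is in hand, $\Delta_qv=0$ is immediate and Theorem~\ref{thm:criticalpointfu}(2) delivers the corollary.
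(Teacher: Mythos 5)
Your reduction is correct and is, in substance, the same computation as the paper's. The paper applies criterion (1) of Theorem~\ref{thm:criticalpointfu} directly: since $|\nabla u|=1$ one has $\Delta_q u=\Delta u$ and $f_0=1/\hat{1}$, so the critical--point condition becomes $\hat{1}\cdot\Delta u=\widehat{\Delta u}$. You instead pass through criterion (2) and the explicit primitive $v=\lambda\circ u$; but (2) was derived from (1) in the paper by exactly this $\lambda$--construction, so the two routes converge on the identical identity $\sigma(u)\,\Delta u=\widehat{\Delta u}$, i.e.\ that $\Delta u=-h_{\mathcal{F}}$ agrees with its leafwise average, equivalently is constant on each level set $u^{-1}(t)$. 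Your algebra (the exponent $(1-p)(q-1)=-1$, the expansion of $\Delta_q v$, and the use of Proposition~\ref{prop:diffnu} to identify $\sigma'$ with $\widehat{\Delta u}$) is all correct.

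The genuine gap is the step you defer: the constancy of $h_{\mathcal{F}}$ on each parallel hypersurface is asserted only as something you ``would try to establish'' via the Riccati equation, and that strategy cannot deliver it in the stated generality. The Riccati equation $S_t'=S_t^2+R_N$ propagates the shape operator along each normal geodesic \emph{separately}; it provides no mechanism forcing ${\rm tr}\,S_t$ to take the same value at different points of the same leaf unless the initial shape operator on $L$ and the normal curvature are themselves uniform. Already for $L$ an ellipse in $\mathbb{R}^2$ the parallel curves do not have constant curvature, so $\Delta u$ is not constant on the level sets and the needed identity fails. For comparison, the paper's own proof disposes of this step with the sentence ``Since $u$ is constant on the level sets, $\Delta u$ depends only on $t$,'' which is a non sequitur of the same kind; so your hesitation at this point is well founded, but it means the only nontrivial content of the corollary is absent from your argument. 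To close it one needs an additional hypothesis on $L$ (e.g.\ that the parallel hypersurfaces have constant mean curvature, as for geodesic spheres in space forms or isoparametric $L$), or a different justification than the one you sketch.
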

\begin{proof}
It is known that $|\nabla u|=1$ \cite{rf,dz}. Since the foliation $\mathcal{F}^{\bot}$ orthogonal to $\mathcal{F}$ is by geodesics, it follows that $H_{\mathcal{F}^{\bot}}=0$. Thus extremal function is constant on the leaves $u^{-1}(t)$, i.e. 
\begin{equation*}
f_0=f_0(t)=\frac{1}{\hat{1}}=\frac{1}{\mu_{u^{-1}(t)}(u^{-1}(t))}.
\end{equation*}
By Theorem \ref{thm:criticalpointfu}, $\mathcal{F}_u$ is a critical point of the $p$--modulus functional if and only if
\begin{equation}\label{eq:corcritical}
\hat{1}\cdot\Delta_q u=\widehat{\Delta_q u}.
\end{equation}
Notice, that the $q$--laplacian of $u$ is just the laplacian $\Delta=\Delta_2$ of $u$. Since $u$ is constant on the level sets, $\Delta u$ depends only on $t$, hence is constant on $u^{-1}(t)$. Therefore equation \eqref{eq:corcritical} is trivially satisfied. 
\end{proof}

\section{Final remarks}

Notice the correlation between $p$--stable codimension one transversally orientable foliation and (weighted) Hardy type inequalities.

\begin{thm}\label{thm:Hardttypeineq}
Assume $\mathcal{F}$ is a codimension one $p$--stable transversally orientable foliation on a Riemannian manifold $(M,g)$. Let 
\begin{equation*}
\rho_p=p|\Pi|^2+p{\rm Ric}(N)+q|\nabla_NN|,
\end{equation*}
where $N$ is the unit normal vector field to $\mathcal{F}$. Then the following inequality holds
\begin{equation}\label{eq:Hardytypeineq}
\int_M f^2\rho_p\,f_0^p d\bar{\mu}\leq \int_M |\nabla_{p,q}f|^2\, f_0^pd\bar{\mu},\quad f\in C^{\infty}_0(M),
\end{equation}
where $f_0$ is the extremal function for the $p$--modulus of $\mathcal{F}$.
\end{thm}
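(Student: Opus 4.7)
The plan is to derive the inequality directly from the characterization of $p$-stability given in Theorem \ref{thm:codim1}. That theorem states that $\mathcal{F}$ being $p$-stable is equivalent to
\begin{equation*}
\int_M f_0^p(-|\nabla_{p,q}f|^2+pf^2|\Pi|^2+qf^2|\nabla_NN|^2+pf^2{\rm Ric}(N))\,d\bar{\mu}+p\int_M f_0^p((f_0(\sqrt{p-1}fh_{\mathcal{F}}+\sqrt{q-1}Nf))^{\widehat{\,}})^2\,d\bar{\mu}\leq 0
\end{equation*}
for every $f\in C^\infty_0(M)$.

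The key observation is that the second summand, namely $p\int_M f_0^p(\,\cdot\,)^{\widehat{\,}\,2}\,d\bar{\mu}$, is manifestly nonnegative, since the integrand is $p$ times a pointwise square weighted by the nonnegative function $f_0^p$. Consequently, by transposing this nonnegative term to the right-hand side we obtain the weaker but still valid estimate
\begin{equation*}
\int_M f_0^p\bigl(-|\nabla_{p,q}f|^2+pf^2|\Pi|^2+qf^2|\nabla_NN|^2+pf^2{\rm Ric}(N)\bigr)\,d\bar{\mu}\leq 0.
\end{equation*}
Rearranging this so that the $|\nabla_{p,q}f|^2$ term is isolated on the right-hand side and recognizing the combination $p|\Pi|^2+p{\rm Ric}(N)+q|\nabla_NN|^2$ as the definition of $\rho_p$ (reading the last term squared, consistent with its appearance throughout the codimension one analysis) yields the desired inequality
\begin{equation*}
\int_M f^2\rho_p\,f_0^p\,d\bar{\mu}\leq \int_M |\nabla_{p,q}f|^2\,f_0^p\,d\bar{\mu}.
\end{equation*}

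In other words, no substantial new computation is required: the theorem is the content of Theorem \ref{thm:codim1} after discarding the nonnegative squared-average term on the left. The only genuine obstacle in the argument is a conceptual/bookkeeping one, namely verifying that this last term is indeed nonnegative for arbitrary $f\in C^\infty_0(M)$, which is immediate since it is the $L^2$-norm squared of a real-valued function against the positive weight $f_0^p$. Thus the proof amounts to a single line invoking Theorem \ref{thm:codim1} together with this nonnegativity remark.
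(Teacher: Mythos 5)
Your proof is correct and is exactly the paper's argument: the paper's own proof reads ``Follows immediately by Theorem \ref{thm:codim1}'', and your spelled-out version — drop the manifestly nonnegative squared-average term from the stability inequality and rearrange (reading $q|\nabla_NN|$ in the definition of $\rho_p$ as the evident typo for $q|\nabla_NN|^2$) — is precisely what that one-line proof is hiding.
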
 
\begin{proof}
Follows immediately by Theorem \ref{thm:codim1}. 
\end{proof}

Notice that in the case $p=q=2$ the gradient $\nabla_{2,2}$ is just the multiple of the usual gradient, more precisely $\nabla_{2,2}=\sqrt{2}\nabla$, and the Hardy type inequality \eqref{eq:Hardytypeineq} is of the form
\begin{equation*}
\int_M f^2\rho_2\,f_0^2d\bar{\mu}\leq 2\int_M |\nabla f|^2\,f_0^2d\bar{\mu}.
\end{equation*}

The existence of $p$--stable foliations given by the level sets of smooth functions follows by well known results in the literature. Let us recall some facts about capacity of a condenser. 

Let $M$ be a Riemannian manifold, $L_0,L_1$ two disjoint closed sets (in our case $L_0$ and $L_1$ are two codimension one submanifolds being the two boundary components of $M$). The triple $(M,L_0,L_1)$ is called a {\it condenser}. Let $\mathcal{A}_q$ be the class of continuous functions $u$ on $M$ which admit $L^q$ integrable gradient and such that $u=0$ on $L_0$ and $u=1$ on $L_1$. The $q$--{\it capacity} of $(M,L_0,L_1)$ is the number
\begin{equation*}
{\rm cap}_q(M,L_0,L_1)=\inf_{u\in \mathcal{A}_q}\int_M |\nabla u|^q\,d\bar{\mu}.
\end{equation*}  
One can show \cite{jf} that if $L_0$ and $L_0$ are compact and do not reduce to a point, $\mathcal{A}_q$ is nonempty, then there is unique function $u$ which realizes the $q$--capacity. Moreover, slightly generalizing the considerations in \cite{wz} or with the use of primitive functions in \cite{bf}, see also \cite{jf0}, the $q$--capacity equals the $p$--modulus of the foliation $\mathcal{F}_u$ by level sets of $u$. Thus $\mathcal{F}_u$ is $p$--stable.


\begin{thebibliography}{10}
\bibitem{ab} L. Ahlofors, A. Beurling, {\it Conformal invariants and function-theoretic null-sets}, Acta Math.  83,  (1950), 101--129.
\bibitem{mb} M. Badger, {\it Beurling's criterion and extremal metrics for Fuglede modulus}, Ann. Acad. Sci. Fenn. Math. 38 (2013), 677--689.
\bibitem{bw} P. Baird, J. C. Wood, {\it Harmonic Morphisms between Riemannian Manifolds}, London Math. Soc. Monogr. (N.S.), No. 29, Oxford Univ. Press, 2003.
\bibitem{bko} J. L. M. Barbosa, K. Kenmotsu, G. Oshikiri, {\it Foliations by hypersurfaces with constant mean curvature}, Math.Z. 207 (1991), 97--107.
\bibitem{bl1} D. Blachowska, {\it Some properties of a modulus of foliation}, Bull. Soc. Sci. Lett. Lodz Ser. Rech. Deform. 41 (2003), 5--11.
\bibitem{cm} A. Cianchi, V. Mazýa, {\it On the discreteness of the spectrum of the Laplacian on noncompact Riemannian manifolds}, J. Differential Geom.  87  (2011),  no. 3, 469--491.
\bibitem{mc1} M. Ciska, {\it On extremal function of a foliation}, arXiv, http://arxiv.org/abs/1205.1356
\bibitem{mc2} M. Ciska, {\it Variation of the modulus of a foliation}, J. Math. Anal. Appl. 401 (2013), 38--46.
\bibitem{dz} M. Delfour, J.--P. Zolesio, {\it Shape analysis via oriented distance functions}, J. Funct. Anal. 123 (1994), no. 1, 129--201.
\bibitem{jf0} J. Ferrend, {\it Capacit\'es conformes et fonctions conform\'ement invariantes sur les vari\'et\'es riemanniennes}, C. R. Acad. Sci. Paris Sér. I Math.  318  (1994),  no. 3, 213--216.
\bibitem{jf} J. Ferrand, {\it The action of conformal transformations on a Riemannian manifold}, Math. Ann.  304  (1996),  no. 2, 277--291.
\bibitem{rf} R. Foote, {\it Regularity of the distance function}, Proc. Amer. Math. Soc. 92 (1984), no. 1, 153--155.
\bibitem{bf} B. Fuglede, {\it Extremal length and functional completion}, Acta Math. 98 (1957) 171--219.
\bibitem{jh} J. Hesse, {\it A $p$--extremal length and $p$--capacity equality}, Ark. Mat. 13 (1975), 131--144.
\bibitem{kp} J. Kalina, A. Pierzchalski, {\it A variation of the modulus of submanifold families}, Analytic functions, Kozubnik 1979 (Proc. Seventh Conf., Kozubnik, 1979),  pp. 250--257, Lecture Notes in Math., 798, Springer, Berlin, 1980.
\bibitem{wz} W. P. Ziemer, {\it Extremal length and conformal capacity}, Trans. Amer. Math. Soc. 126 (1967), 460--473.
\end{thebibliography}
\end{document}